\newcommand{\bt}{\bar{t}}
\def\n{\nabla}
\def\<{\langle}
\def\>{\rangle}
\def\S{\Sigma}
\def\n{\nabla}
\def\n{\nabla}
\def\p{\partial}
\def\k{\kappa}
\def\s{\sigma}
\def\n{\nabla}
\def\<{\langle}
\def\>{\rangle}
\def\div{{\rm div}}
\def\n{\nabla}
\def\RR{\mathbb{R}}
\def\SS{\mathbb{S}}
\def\p{\partial}
\def\s{\sigma}
\def\R{\mathbb{R}}
\def\V{\mathcal V}
\def\L{\mathcal{L}}
\def\bt{\mathbf{b}_\theta}
\newtheorem{thm}{Theorem}[section] 
\newtheorem{cor}[thm]{Corollary} 
\newtheorem{prop}[thm]{Proposition} 
\newtheorem{defn}[thm]{Definition} 
\theoremstyle{definition} 
\theoremstyle{remark} 
\numberwithin{equation}{section}
\begin{document}
\setlength{\baselineskip}{1.2\baselineskip}

\title[A constrained mean curvature flow and Alexandrov-Fenchel inequalities]
{A constrained mean curvature flow and Alexandrov-Fenchel inequalities}
	
\author{Xinqun Mei}

\address{Mathematisches Institut, Albert-Ludwigs-Universit\"{a}t Freiburg, Freiburg im Breisgau, 79104, Germany}

\address{School of Mathematical Sciences, University of Science and Technology of China, Hefei, 230026, P.R.China}

\email{xinqun.mei@math.uni-freiburg.de}

\author{Guofang Wang}
\address{Mathematisches Institut, Albert-Ludwigs-Universit\"{a}t Freiburg, Freiburg im Breisgau, 79104, Germany}
\email{guofang.wang@math.uni-freiburg.de}

\author{Liangjun Weng}
\address{School of Mathematical Sciences, Anhui University, Hefei, 230601, P. R. China}
\email{ljweng08@mail.ustc.edu.cn}
	
\subjclass[2020]{Primary 53C21, Secondary 35K55, 35K93.}
	\keywords{mean curvature flow, capillary boundary, relative quermassintegral, Alexandrov-Fenchel inequality, Minkowski inequality. }
 
	\maketitle
	\begin{abstract}
		In this article, we study a locally constrained mean curvature flow for star-shaped hypersurfaces with capillary boundary in the half-space. We prove its long-time existence and the global convergence to a spherical cap. Furthermore, the capillary quermassintegrals defined in \cite{WWX2022} evolve monotonically along the flow, and hence we establish a class of new Alexandrov-Fenchel inequalities for convex hypersurfaces with capillary boundary in the half-space.
	\end{abstract}
	
	\bigskip
	
	\section{Introduction}
	
The mean curvature flow plays an important role in geometric analysis and differential geometry and has been extensively studied. One of  the 
classical results proved by  Huisken \cite{Hui1984} states that it contracts a closed convex  hypersurface into a round point and converges to a sphere after a suitable rescaling. Later he constructed a normalized  mean curvature flow to prove the isoperimetric inequality in  \cite{Hui1987}.  Inspired by the Minkowski formulas for closed hypersurfaces,
Guan-Li \cite{GL15} introduced a new locally constrained mean curvature flow to study the isoperimetric problem, which is defined by
\begin{eqnarray}\label{guan-li flow}
	 	\partial_tx=(n -\<x,\nu\>H)\nu,  \qquad  \quad \hbox{ for }  x:M\times [0,T)\to\RR^{n+1}, 
\end{eqnarray}
where $M$ is a $n$-dimensional closed manifold, $H$ and $\nu$ are the mean curvature and unit outward normal vector of hypersurface $x(M,\cdot)$ respectively. Along  flow \eqref{guan-li flow},
 the enclosed domain has a fixed volume, while the
area is monotone decreasing by the Minkowski formulas. Guan-Li obtained the long-time existence of this flow and proved that it smoothly converges to a round
sphere if the initial hypersurface is star-shaped. 
As a result, it also yields a flow proof of classical Alexandrov-Fenchel inequalities of quermassintegrals in convex geometry.
For more details
see \cite[Corollary 1.2]{GL15}.
 Using similar ideas and  new locally constrained curvature flows, there have been a lot of work developed  in the last decades. See for instance \cite{CGLS,ChS,GL09,GL18,GLW,HL,HLW,SX19} and references therein.

 In this paper, we are interested in the isoperimetric inequalities and  Alexandrov-Fenchel inequalities for  capillary hypersurfaces in the half-space. We first  introduce a new locally constrained mean curvature type flow which is suitable  for  capillary hypersurfaces in the half-space. The flow  is  also motivated by the Minkowski formula (i.e. \eqref{minkowski formula-0})  for capillary hypersurfaces in $\bar \RR^{n+1}_+$.	Besides, this paper is inspired by a new research on capillary hypersurfaces in \cite{WWX2022}, where the authors establish a  class of new Alexandrov-Fenchel inequalities for capillary hypersurfaces in the half-space.
 
  By abusing a little bit the terminology, a capillary hypersurface is defined in this paper as the following.
\begin{defn}
	A hypersurface in $\bar{{\R}}_+^{n+1}$  with boundary supported on $\p\bar{{\R}}_+^{n+1}$ is called capillary hypersurface if
	it intersects with $\p\bar{{\R}}_+^{n+1}$ at a constant angle $\theta\in (0,\pi)$.
\end{defn}
Let $\Sigma_{t}$ be a family of hypersurfaces with boundary in $\bar{\RR}^{n+1}_{+}$ given by a family of isometric embeddings $x(\cdot, t): M \rightarrow \bar {\RR}^{n+1}_{+}$ from a compact $n$-dimensional manifold $M$ with   boundary $\partial M$ ($n\geq 2$) such that 
\begin{eqnarray}
	{\rm 	int}(\Sigma_{t})=x\left({\rm int}(M), t\right)\subset\mathbb{\RR}^{n+1}_{+},\quad \partial\Sigma_{t}=x(\partial M,t)\subset \partial \mathbb{\RR}_{+}^{n+1}.\notag
\end{eqnarray}
In this paper, we study the following locally constrained mean curvature flow 
for capillary hypersurfaces  defined by 

\begin{eqnarray}\label{flow with capillary}
	\left\{ \begin{array}{rcll}
		(\partial_t x)^\perp  &=&\left(n +n\cos\theta\<\nu, e\>-H\<x, \nu\> \right) \nu , \quad &
		\hbox{ in }M\times[0,T),\\
		\<\nu ,e \>& =& -\cos\theta 
		\quad & \hbox{ on }\partial M \times [0,T),\\
		 x(\cdot,0)  &=& x_0(\cdot) \quad & \text{ in }   M,
	\end{array}\right. 
\end{eqnarray}	  
 where  $\nu, H$ are the unit normal, the mean curvature of $\S$ respectively, $e:=-E_{n+1}=(0,\cdots, 0,-1)$ is the unit outward normal of $\p\bar\RR^{n+1}_+$ in $\bar\RR^{n+1}_+$.

 Compared to flow \eqref{guan-li flow}, there is an extra term $n\cos\theta \<\nu,e\>$ in the speed function $f$ of our flow \eqref{flow with capillary}, which comes naturally from the  Minkowski formula, namely 
		\begin{eqnarray}\label{minkowski formula-0}
n\int_{\Sigma}\left(1+\cos\theta\<\nu, e\>\right)dA=\int_{\Sigma}H\<x, \nu\>dA,
\end{eqnarray}
for any capillary hypersurfaces.  For a proof we refer to  \cite{AS, WWX2022}.
There is an interesting  family of capillary hypersurfaces, which are the spherical caps lying entirely in $\bar{\RR}^{n+1}_{+}$ and intersecting $\partial{\RR}^{n+1}_{+}$ with a constant contact angle $\theta\in (0,\pi)$  given by
\begin{eqnarray}\label{sph-cap}
	C_{\theta, r}:=\Big\{x\in \bar{\RR}^{n+1}_{+}\big||x-r\cos\theta e|=r \Big\},~r~\in [0, \infty).
\end{eqnarray}
 
One can easily check that $C_{\theta,r}$ is  a static solution to flow \eqref{flow with capillary}, that is 
\begin{eqnarray}\label{eq_a1}
	1+\cos\theta\<\nu, e\>-\frac{H}{n}\<x, \nu\>=0.
\end{eqnarray} 
In fact,  $C_{\theta, r}$ consist of all solutions of \eqref{eq_a1}. See Proposition \ref{prop_0} below.
We also denote $\SS_\theta^n$, $\widehat {\p \SS_\theta^n}$, $ {\mathbb B}^{n+1}_\theta$ as
$$ \SS^n_\theta: =\{ x\in \SS^n \,|\,  \langle x, E_{n+1} \rangle >\cos \theta\}, \quad {\mathbb B}^{n+1}_\theta :=\{ x\in   {\mathbb B}^{n+1} \,|\,  \langle x, E_{n+1} \rangle >\cos \theta\},
$$
$$ { \widehat{\p \SS^n_\theta}:= \{ x\in   {\mathbb B}^{n+1} \,|\,  \langle x, E_{n+1} \rangle =\cos \theta\}.
}
$$
 
Now we state one of the main theorems in this paper.	
\begin{thm}\label{thm 1.1}
	If the initial capillary hypersurface is   star-shaped with respect to origin in $\bar \RR^{n+1}_+$ and the contact angle  $\theta\in(0,\pi)$,
	 then  flow \eqref{flow with capillary} exists for all time. Moreover, 
	$x(\cdot, t)$ smoothly converges to a uniquely determined spherical cap $C_{\theta, r_0}$ around $e$, as $t\to\infty$.		
\end{thm}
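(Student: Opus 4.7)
The plan is to reparametrize \eqref{flow with capillary} as a scalar parabolic equation via star-shapedness, establish uniform a priori estimates up to $C^{2,\alpha}$, conclude long-time existence, and then use the Minkowski identity \eqref{minkowski formula-0} and the monotonicity of a capillary area functional to extract a smooth limit, which by Proposition \ref{prop_0} must be a spherical cap. Since $\Sigma_0$ is star-shaped about the origin, one first writes $\Sigma_t$ as a radial graph $x = \rho(z,t)\,z$ over the reference spherical cap $\SS^n_\theta$. Only the normal component of $\p_t x$ is prescribed by \eqref{flow with capillary} (the tangential part is gauge), so $\rho$ evolves by a quasilinear parabolic equation, and the capillary condition $\<\nu, e\> = -\cos\theta$ becomes an oblique (Neumann-type) condition on $\p\SS^n_\theta$. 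Standard theory for quasilinear parabolic equations with oblique boundary conditions yields short-time existence on a maximal interval $[0, T^*)$.

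The a priori estimates proceed in three stages. For the \textbf{$C^1$ bound} set $u := \<x, \nu\>$: star-shapedness amounts to $u > 0$, and applying the maximum principle to the evolution equation of $u$, together with careful analysis of the oblique boundary term (using that $\<\nu, e\>$ is constant along $\p\Sigma_t$ and hence its tangential derivatives vanish), yields $u \geq c_0 > 0$ uniformly. For the \textbf{$C^0$ bound}, at an interior spatial maximum of $\rho$ one has $\<\nu,z\> = 1$, so that $H\rho \geq n$; a short computation shows the speed
\begin{equation*}
\Phi := n + n\cos\theta\,\<\nu, e\> - H\,u
\end{equation*}
is non-positive there, and the oblique boundary condition handles boundary extrema, yielding $\rho_{\max}(t) \leq \rho_{\max}(0)$ and analogously for $\rho_{\min}$. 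For the \textbf{$C^2$ bound} — the main obstacle — one bounds $H$ from above by applying the maximum principle to an auxiliary quantity of the form $H/(u-c)$ for a constant $c$ with $u - c > 0$ along the flow, chosen so that the oblique boundary term has a favorable sign; lower bounds on the principal curvatures then follow from the flow structure once $H \leq C$ and $u \geq c_0 > 0$.

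Given these uniform $C^2$ bounds and uniform parabolicity, Krylov–Safonov combined with Schauder estimates for oblique boundary problems upgrade regularity to $C^{2,\alpha}$ and then to all higher derivatives, so $T^* = +\infty$. For convergence, \eqref{minkowski formula-0} gives $\int_{\Sigma_t} \Phi\, dA \equiv 0$, so the capillary-enclosed volume is preserved; a direct computation using \eqref{minkowski formula-0} and Cauchy–Schwarz shows the appropriate capillary area functional is monotone non-increasing, with vanishing derivative precisely when $\Phi \equiv 0$. Since the area is bounded below and all estimates are uniform in $t$, one extracts $t_k \to \infty$ along which $\Phi \to 0$; by Arzelà–Ascoli a subsequence of $\Sigma_{t_k}$ converges smoothly to a limit $\Sigma_\infty$ satisfying \eqref{eq_a1}, which by Proposition \ref{prop_0} is a spherical cap $C_{\theta, r_0}$. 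The preserved volume pins down $r_0$, making the limit unique; a standard Lyapunov-type argument then upgrades subsequential to full smooth convergence as $t \to \infty$.

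The main technical difficulty is the $C^2$ bound: the additional $n\cos\theta\,\<\nu, e\>$ term in $\Phi$, essential for the Minkowski structure, complicates both the evolution equation of $H$ and the sign analysis of the oblique boundary condition at $\p\Sigma_t$. The auxiliary function in the maximum principle must be tuned so that every boundary contribution has the correct sign, and the near-boundary curvature estimates require a dedicated argument compared to the closed case of Guan–Li \cite{GL15}.
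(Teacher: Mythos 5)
Your overall skeleton (reduction to a scalar oblique-derivative parabolic problem, uniform $C^0$/$C^1$ estimates, long-time existence, convergence via the Minkowski identity and the rigidity of \eqref{eq_a1}) is the same as the paper's, but the core step is not carried out correctly. The gradient estimate is exactly where the capillary difficulty sits, and applying the maximum principle directly to $u=\<x,\nu\>$ does not work. On $\p\Sigma_t$ one has $\n_\mu u=\cot\theta\, h(\mu,\mu)\,u$, and neither $\cot\theta$ (for $\theta>\pi/2$) nor $h(\mu,\mu)$ (for merely star-shaped, non-convex $\Sigma_t$) has a sign, so "tangential derivatives of $\<\nu,e\>$ vanish" does not give you a usable boundary sign. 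The interior term fails as well: $\mathcal{L}u=n+n\cos\theta\<\nu,e\>-2uH+u^2|h|^2$, and after $|h|^2\ge H^2/n$ this is a quadratic in $s=uH$ with discriminant $4|\cos\theta|>0$ for $\theta\neq\pi/2$, hence negative on a range of $s$; so the minimum of $u$ need not be non-decreasing. The paper's key new idea, which your proposal is missing, is to replace $u$ by the capillary support function $\bar u=\<x,\nu\>/(1+\cos\theta\<\nu,e\>)$: it satisfies the exact Neumann condition $\n_\mu\bar u=0$ on $\p\Sigma_t$ and the interior evolution produces a perfect square, $\mathcal{L}\bar u\ge\big(\bar u H/\sqrt n-\sqrt n\big)^2\ge 0$ modulo gradient terms, which yields $\<x,\nu\>\ge c_0>0$ for the whole range $\theta\in(0,\pi)$.

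Two further points. Your $C^0$ argument at an interior maximum of $\rho$ gives $\Phi\le n\cos\theta\<\nu,e\>$ there, which is nonpositive only when $\cos\theta\ge 0$; for $\theta\in(\pi/2,\pi)$ the sign is wrong (consistent with the fact that the static caps $C_{\theta,r}$ are centered at $r\cos\theta e$, not at the origin), and the boundary case is not handled. The paper instead obtains the $C^0$ bound by the avoidance principle, using the static caps $C_{\theta,r_1},C_{\theta,r_2}$ as barriers. Finally, your "$C^2$ bound as the main obstacle" is a misreading of the structure: once $\varphi$ is bounded in $C^1$, the scalar equation \eqref{scalar flow with capillary} is uniformly parabolic and quasilinear with a strictly oblique boundary condition ($|\cos\theta|<1$), so $C^{2,\alpha}$ and all higher estimates follow from standard theory (Ladyzhenskaya--Solonnikov--Uraltseva, Lieberman); no curvature maximum principle is needed for Theorem \ref{thm 1.1}. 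Moreover, your claim that lower bounds on the principal curvatures "follow from the flow structure" given $H\le C$ is unjustified: for star-shaped data no such bound holds, and in the paper the curvature maximum principles (with $P=H\bar u$, $\bar H$) are only used to preserve mean convexity/convexity for $\theta\in(0,\pi/2]$ in the proof of Theorem \ref{thm1.4}, not for the convergence result.
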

It is worth noting that  we could obtain the uniform a priori estimates for flow \eqref{flow with capillary} in the whole range $\theta\in(0,\pi)$ instead of only a  partial range of contact angle compared with results in  \cite{WW20} or \cite{WeX21}.

It is easy to see that flow \eqref{flow with capillary} preserves the enclosed volume 
$$\V_{0,\theta}(\widehat{\Sigma}):= |\widehat{ \S}|,$$
by using \eqref{minkowski formula-0}.
A nice feature of flow \eqref{flow with capillary} is that along the flow, the capillary  area (one can refer to \cite{Finn,RR} for more interpretation)
$$\V_{1,\theta}(\widehat{\Sigma}):= \frac{1}{n+1} ( |\S|-\cos\theta |\widehat{\p\S}|),$$
is decreasing, which follows from
 the higher order Minkowski formulas, proved recently in 
 \cite[Proposition 2.5]{WWX2022}, 
\begin{eqnarray}\label{minkowski formula}
	\int_{\Sigma}H_{k-1}\left(1+\cos\theta\<\nu, e\>\right)dA= \int_{\Sigma}H_{k}\<x, \nu\>dA,
\end{eqnarray}
where $H_k$  ($1\le k\le n$) is the normalized $k$-th mean curvature of $\Sigma\subset \R^{n+1}$ and $H_0=1$.

As a result, Theorem \ref{thm 1.1} yields a flow proof for capillary isoperimetric inequality for star-shaped hypersurfaces in $\bar \RR^{n+1}_+$. 
	
\begin{cor}
	For $n\ge 2$,  let $\Sigma\subset    \bar{\mathbb{R}}^{n+1}_+$ be a star-shaped capillary
	hypersurface with a contact angle  $\theta \in (0, \pi)$, 
then 
	there holds
	\begin{eqnarray}\label{capillary iso ineq}
\frac {|\Sigma|-\cos \theta |\widehat{\p \Sigma}|}
{|{\mathbb S}_{\theta}^{n}|-\cos \theta |  { \widehat{\p \SS^n_\theta}}|}\ge  \left( \frac {|\widehat \Sigma|}{|{\mathbb B}^{n+1}_\theta|} \right)^{\frac n {n+1}}.
	\end{eqnarray} Moreover, equality holds if and only if $\Sigma$  is a spherical cap in \eqref{sph-cap}. \end{cor}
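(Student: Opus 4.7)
The plan is to use the star-shaped capillary hypersurface $\Sigma$ as the initial datum $x_0(M)=\Sigma$ for the flow \eqref{flow with capillary} and apply Theorem \ref{thm 1.1}: the flow exists for all time and converges smoothly to some spherical cap $C_{\theta, r_0}$. Two monotonicity facts along the flow drive the argument. First, the enclosed volume $|\widehat{\Sigma_t}|$ is preserved, since
\begin{equation*}
\tfrac{d}{dt}|\widehat{\Sigma_t}| = \int_{\Sigma_t}\bigl(n + n\cos\theta\langle \nu, e\rangle - H\langle x,\nu\rangle\bigr)\, dA = 0
\end{equation*}
by the Minkowski formula \eqref{minkowski formula-0}. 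Second, the capillary area $|\Sigma_t| - \cos\theta\,|\widehat{\p \Sigma_t}|$ is monotonically decreasing in $t$.

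To verify the latter, write $f = n + n\cos\theta\langle \nu, e\rangle - H\langle x,\nu\rangle$ for the speed. Along the capillary boundary the conormals satisfy $\mu = \sin\theta\, e + \cos\theta\,\bar{\mu}$, where $\mu,\bar{\mu}$ are the outward conormals of $\p\Sigma_t$ in $\Sigma_t$ and in $\p\bar{\RR}^{n+1}_+$ respectively. Since $\partial_t x$ is tangent to $\p\bar{\RR}^{n+1}_+$ on $\p M$, this yields $\langle \partial_t x,\mu\rangle = \cos\theta \langle \partial_t x,\bar{\mu}\rangle$, so the boundary contributions from the first variations of $|\Sigma_t|$ and $|\widehat{\p\Sigma_t}|$ cancel and
\begin{equation*}
\tfrac{d}{dt}\bigl(|\Sigma_t| - \cos\theta\,|\widehat{\p \Sigma_t}|\bigr) = \int_{\Sigma_t} H f\, dA.
\end{equation*}
Inserting \eqref{minkowski formula} for $k=2$, i.e.\ $\int_{\Sigma_t}\tfrac{H}{n}(1+\cos\theta\langle\nu,e\rangle)\,dA=\int_{\Sigma_t}H_2\langle x,\nu\rangle\,dA$, to rewrite the linear part of $f$, the right-hand side becomes $\int_{\Sigma_t}(n^2 H_2 - H^2)\langle x,\nu\rangle\,dA$, which is non-positive by the Newton--Maclaurin inequality $H^2\geq n^2 H_2$ and the preserved star-shapedness $\langle x,\nu\rangle>0$.

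Passing to the limit $t\to\infty$, the terminal spherical cap $C_{\theta,r_0}$ satisfies $|\widehat{C_{\theta,r_0}}|=r_0^{n+1}|\mathbb{B}^{n+1}_\theta|$, so volume conservation pins down $r_0=\bigl(|\widehat{\Sigma}|/|\mathbb{B}^{n+1}_\theta|\bigr)^{1/(n+1)}$, while $|C_{\theta,r_0}|-\cos\theta\,|\widehat{\p C_{\theta,r_0}}|=r_0^n\bigl(|\mathbb{S}^n_\theta|-\cos\theta\,|\widehat{\p \mathbb{S}^n_\theta}|\bigr)$. Combining the two with the monotonicity of the capillary area gives precisely \eqref{capillary iso ineq}. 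Equality along the flow forces $H^2\equiv n^2 H_2$, hence $\Sigma$ is totally umbilical, and the classification of umbilical capillary hypersurfaces in the half-space identifies $\Sigma$ with a spherical cap of the form \eqref{sph-cap}. The main delicate step is the cancellation of the boundary terms in the first variation; everything after that is a transparent consequence of Theorem \ref{thm 1.1} and the Minkowski identities already in the paper.
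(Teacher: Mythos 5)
Your proposal is correct and follows essentially the same route as the paper: run the flow \eqref{flow with capillary} from $\Sigma$, use the Minkowski formulas \eqref{minkowski formula-0} and \eqref{minkowski formula} together with $H^2\ge n^2H_2$ and the preserved star-shapedness $\langle x,\nu\rangle>0$ to get $\partial_t\V_{0,\theta}=0$ and $\partial_t\V_{1,\theta}\le 0$, then pass to the limiting spherical cap from Theorem \ref{thm 1.1}; the only difference is that you re-derive the first variation of $|\Sigma_t|-\cos\theta|\widehat{\p\Sigma_t}|$ via the conormal relation $\mu=\sin\theta\, e+\cos\theta\,\bar\nu$, where the paper simply cites \cite[Theorem 2.6]{WWX2022} (cf. Proposition \ref{mono along mcf}). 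Your sketch of the equality case (constancy forces $H_1^2\equiv H_2$, hence umbilicity and a spherical cap) is at the same level of detail as the paper, which omits it.
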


The capillary isoperimetric inequality holds true in fact for any hypersurface in $\bar\R^{n+1}_+$. For the proof we refer to 
   \cite[Chapter 19]{Maggi}, using the spherical symmetrization.

Recently, as the generalization  of the capillary  area $\V_{1,\theta}(\widehat{\S})$, a class of new  quermassintegrals $\V_{k,\theta}(\widehat{\S})$ for capillary hypersurfaces $\S\subset \bar \RR^{n+1}_+$  have been introduced in  \cite{WWX2022}, which  are, for $1\leq k\leq n$,
\begin{eqnarray}\label{quermassintegrals}
	\V_{k+1,\theta}(\widehat{\S})  
	&:=&
	\frac{1}{n+1}\left(\int_\S H_kdA - \frac{\cos\theta \sin^k\theta  }{n}\int_{\p\S} H_{k-1}^{\p\S}ds \right)
	,
\end{eqnarray}where  $H_{k-1}^{\p\S}$ is the normalized  $(k-1)$-th mean curvature of $\p\S\subset \RR^n$.   Another motivation to 
study  flow \eqref{flow with capillary} is to establish  the following Alexandrov-Fenchel  inequalities.

\begin{thm}\label{thm1.4}
	For $n\ge 2$,  let $\Sigma\subset    \bar{\mathbb{R}}^{n+1}_+$ be a  convex capillary hypersurface with a contact angle $\theta \in (0, \frac{{\pi}}{2} ]$, then 
there holds
\begin{eqnarray}\label{af ineq V_k V_0}
\left( \frac{  \V_{k,\theta}(\widehat{\S}) }{\textbf{b}_\theta} \right)^{\frac{1}{n+1-k}}\geq \left(\frac{\V_{0,\theta}(\widehat{\S})}{\textbf{b}_\theta} \right)^{\frac{1}{n+1}},  \quad  \forall \,  1\le k <n,
\end{eqnarray}where $\bt$ is the $(n+1)$ dimensional  volume of ${\mathbb B}^{n+1}_\theta$, 
with equality if and only if $\Sigma$  is a spherical cap in \eqref{sph-cap}. 	
\end{thm}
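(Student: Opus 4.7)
My plan is to deduce \eqref{af ineq V_k V_0} by running flow \eqref{flow with capillary} from the given convex hypersurface and combining Theorem \ref{thm 1.1} with a monotonicity of the capillary quermassintegrals. Since a convex capillary hypersurface in $\bar\RR^{n+1}_+$ is in particular star-shaped with respect to the origin, Theorem \ref{thm 1.1} applies to $\Sigma_0=\Sigma$: the solution $\Sigma_t$ exists for all $t\geq 0$ and converges smoothly to a uniquely determined spherical cap $C_{\theta,r_0}$.

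\emph{Step 1: preservation of convexity.} The key technical step is to show that $\Sigma_t$ stays convex for every $t\geq 0$. I would apply a tensor maximum principle to the Weingarten operator (or its inverse, which has the advantage of being bounded), with particular care at the capillary boundary. The contact angle condition $\langle\nu,e\rangle=-\cos\theta$ produces a Neumann-type identity for the second fundamental form on $\partial\Sigma_t$ via the Codazzi equations; the restriction $\theta\in(0,\pi/2]$ enters precisely here, to give the boundary term in the Hopf-type lemma the correct sign. I expect this to be the main obstacle of the proof.

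\emph{Step 2: monotonicity of $\V_{k,\theta}$.} Once convexity is preserved, all $H_j$ stay nonnegative and the Newton--MacLaurin inequality $H_{k+1}\leq H_1 H_k$ is valid pointwise. Writing the normal speed as
\begin{equation*}
f=n\bigl(1+\cos\theta\langle\nu,e\rangle\bigr)-H\langle x,\nu\rangle,
\end{equation*}
the first-variation formula for the capillary quermassintegrals from \cite{WWX2022} reduces, after cancellation of the boundary contributions thanks to the capillary condition, to
\begin{equation*}
\frac{d}{dt}\V_{k,\theta}(\widehat{\Sigma_t})=\frac{n+1-k}{n+1}\int_{\Sigma_t} H_k\,f\,dA.
\end{equation*}
Substituting $f$ and invoking the higher-order Minkowski identity \eqref{minkowski formula} on the first summand yields
\begin{equation*}
\frac{d}{dt}\V_{k,\theta}(\widehat{\Sigma_t})=\frac{n(n+1-k)}{n+1}\int_{\Sigma_t}\bigl(H_{k+1}-H_1 H_k\bigr)\langle x,\nu\rangle\,dA\leq 0,
\end{equation*}
by star-shapedness $\langle x,\nu\rangle\geq 0$ and Newton--MacLaurin. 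The case $k=0$ reduces to \eqref{minkowski formula-0}, so $\V_{0,\theta}$ is conserved.

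\emph{Step 3: comparison with the limiting cap.} Passing to $t\to\infty$ and using Theorem \ref{thm 1.1} one obtains
\begin{equation*}
\V_{0,\theta}(\widehat{\Sigma})=\V_{0,\theta}(\widehat{C_{\theta,r_0}})=r_0^{n+1}\,\bt,\qquad \V_{k,\theta}(\widehat{\Sigma})\geq \V_{k,\theta}(\widehat{C_{\theta,r_0}})=r_0^{n+1-k}\,\bt,
\end{equation*}
the cap values coming from $\V_{j,\theta}(\widehat{C_{\theta,1}})=\bt$ (a short divergence-theorem computation on $\mathbb{B}^{n+1}_\theta$) combined with scaling in $r$. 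Eliminating $r_0$ produces \eqref{af ineq V_k V_0}. Equality forces $H_{k+1}=H_1 H_k$ identically, so $\Sigma$ is umbilical and must be a spherical cap of the form \eqref{sph-cap}.
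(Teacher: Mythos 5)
Your overall strategy is exactly the paper's: run flow \eqref{flow with capillary} from $\Sigma$, show convexity is preserved, use the Minkowski formula \eqref{minkowski formula} plus Newton--MacLaurin to get $\V_{0,\theta}$ conserved and $\V_{k,\theta}$ non-increasing, and compare with the limit cap from Theorem \ref{thm 1.1}; your Steps 2 and 3 (including the value $\V_{j,\theta}(\widehat{C_{\theta,r}})=r^{n+1-j}\bt$) are correct and coincide with Proposition \ref{mono along mcf} and the proof of Theorem \ref{thm1.4}. The genuine gap is Step 1, which you yourself identify as the main obstacle but leave as a one-sentence plan. A tensor maximum principle for the Weingarten map (or its inverse) at a capillary boundary is not a routine matter: the boundary identity coming from $\mu$ being a principal direction is $h_{\alpha\alpha;\mu}=\cot\theta\, h_{\alpha\alpha}(h_{11}-h_{\alpha\alpha})$, whose sign depends on the comparison between $h_{11}$ and $h_{\alpha\alpha}$, so the full tensor does not acquire a usable Hopf-type sign; one only gets a sign for carefully chosen scalar combinations. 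The paper's actual route is to first prove uniform two-sided bounds on $H$ (Propositions \ref{upper bound of H} and \ref{lower bound of H}, the latter via the quantity $P=H\bar u$ with $\nabla_\mu P=0$), and then apply the scalar maximum principle to the harmonic mean curvature $\bar H=\sum_i\kappa_i^{-1}$, for which \eqref{deri of Widetilde H} gives $\nabla_\mu\bar H\le 0$ precisely when $\theta\in(0,\tfrac\pi2]$, and for which the interior argument needs the lower bound on $H$ to close. None of this is supplied or replaced by an argument in your proposal, so the convexity-preservation step remains unproven.

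Two smaller points. First, convexity of $\Sigma$ does not by itself give star-shapedness with respect to the origin (think of a cap translated far away along $\partial\RR^{n+1}_+$); since the quermassintegrals are invariant under horizontal translations one may translate so that the origin lies suitably inside $\widehat{\p\Sigma}$, but this normalization should be stated before invoking Theorem \ref{thm 1.1}. Second, your equality discussion only works when the flow argument applies, i.e.\ for strictly convex $\Sigma$; the theorem assumes only convexity, so the inequality requires an approximation step, and the equality characterization then needs a separate argument (the paper appeals to \cite{SWX} and \cite{GL09} for this), which your ``equality forces $H_{k+1}=H_1H_k$, hence umbilical'' reasoning does not cover in the degenerate case.
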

Note that \eqref{af ineq V_k V_0}  gives  partially an affirmative answer to 
  a question in \cite[Conjecture 1.5]{WWX2022}, where the authors proved
  \begin{eqnarray*}
  \frac{  \V_{n,\theta}(\widehat{\S}) }{\textbf{b}_\theta} \geq \left( \frac{  \V_{k,\theta}(\widehat{\S}) }{\textbf{b}_\theta} \right)^{\frac{1}{n+1-k}}  
   ,  \quad  \forall \,  1\le k <n,
\end{eqnarray*}
  by using a locally constrained inverse curvature type flow.
When $ k=1$ in \eqref{af ineq V_k V_0}, it is  the capillary isoperimetric inequality considered above. 
We remark that the restriction of range $\theta\in(0,\frac{\pi}{2}]$ is only used in showing the convexity preserving property along the flow \eqref{flow with capillary}.  See   Proposition \ref{H case preserve convexity} or Eq. \eqref{deri of Widetilde H}. 

As a direct consequence of Theorem \ref{thm1.4},  for $k=2$ in \eqref{af ineq V_k V_0}, we obtain a volumetric Minkowski inequality for convex
capillary hypersurfaces in $\bar\RR^{n+1}_+$.
\begin{cor}\label{cor1.4}
	For $n\ge 2$,  let $\Sigma\subset    \bar{\mathbb{R}}^{n+1}_+$ be a  {convex} capillary hypersurface with a contact angle $ {\theta \in (0, \frac{{\pi}}{2} ]}$, then 
	there holds
	\begin{eqnarray}\label{mink ineq}
 \int_\S HdA  -\sin\theta\cos\theta |\p\S| \geq n(n+1) \bt^{\frac{2}{n+1}}|\widehat{\S}|^{\frac{n-1}{n+1}},
	\end{eqnarray} 
	with equality if and only if $\Sigma$  is a spherical cap in \eqref{sph-cap}. 	
\end{cor}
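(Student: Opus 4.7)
The plan is to derive the corollary as a pure algebraic consequence of Theorem \ref{thm1.4} by specializing to $k=2$ and unpacking the definition of $\mathcal{V}_{2,\theta}$. No further geometric analysis is needed, since the convexity hypothesis, the contact angle range $\theta\in(0,\pi/2]$, and the equality characterization are all inherited from Theorem \ref{thm1.4}.

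First, I would specialize the definition \eqref{quermassintegrals} to $k=1$, which gives
\begin{equation*}
\mathcal{V}_{2,\theta}(\widehat{\Sigma})=\frac{1}{n+1}\Bigl(\int_\Sigma H_1\,dA-\frac{\cos\theta\sin\theta}{n}\int_{\partial\Sigma}H_0^{\partial\Sigma}\,ds\Bigr).
\end{equation*}
Using the conventions $H_1=\tfrac{H}{n}$ and $H_0^{\partial\Sigma}=1$ and pulling out a common factor of $\tfrac{1}{n}$, this simplifies to
\begin{equation*}
\mathcal{V}_{2,\theta}(\widehat{\Sigma})=\frac{1}{n(n+1)}\Bigl(\int_\Sigma H\,dA-\sin\theta\cos\theta\,|\partial\Sigma|\Bigr).
\end{equation*}

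Next, I would invoke Theorem \ref{thm1.4} with $k=2$, namely
\begin{equation*}
\Bigl(\frac{\mathcal{V}_{2,\theta}(\widehat{\Sigma})}{\mathbf{b}_\theta}\Bigr)^{1/(n-1)}\ge\Bigl(\frac{\mathcal{V}_{0,\theta}(\widehat{\Sigma})}{\mathbf{b}_\theta}\Bigr)^{1/(n+1)},
\end{equation*}
where $\mathcal{V}_{0,\theta}(\widehat{\Sigma})=|\widehat{\Sigma}|$. Raising both sides to the power $n-1$ and using $1-\tfrac{n-1}{n+1}=\tfrac{2}{n+1}$, I rearrange to
\begin{equation*}
\mathcal{V}_{2,\theta}(\widehat{\Sigma})\ge \mathbf{b}_\theta^{2/(n+1)}\,|\widehat{\Sigma}|^{(n-1)/(n+1)}.
\end{equation*}

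Finally, substituting the explicit expression for $\mathcal{V}_{2,\theta}$ and multiplying through by $n(n+1)$ yields exactly \eqref{mink ineq}. The rigidity statement is immediate: equality in Theorem \ref{thm1.4} characterizes spherical caps of the form \eqref{sph-cap}, and since the manipulations above are all equivalences, equality in \eqref{mink ineq} holds precisely for these caps. There is no real obstacle here—the only thing to be careful about is keeping the exponents and the $\sin\theta\cos\theta$ coefficient straight during the simplification of $\mathcal{V}_{2,\theta}$.
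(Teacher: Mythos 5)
Your proposal is correct and is exactly the route the paper takes: the paper offers no separate argument beyond noting that \eqref{mink ineq} is the $k=2$ case of Theorem \ref{thm1.4} once one writes out $\V_{2,\theta}(\widehat{\S})=\tfrac{1}{n(n+1)}\bigl(\int_\S H\,dA-\sin\theta\cos\theta\,|\p\S|\bigr)$ from \eqref{quermassintegrals}, and your algebra and rigidity transfer are the same. The only (shared) blemish is that for $n=2$ the choice $k=2$ formally lies outside the stated range $1\le k<n$ of \eqref{af ineq V_k V_0}, a wording issue already present in the paper rather than a defect of your argument.
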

It is remarkable to note that Agostiniani-Fogagnolo-Mazzieri  obtained a volumetric Minkowski inequality in \cite[Theorem 1.5]{AFM2} without any convexity assumption for closed hypersurfaces in Euclidean space, see also \cite{CW,Glaudo,Qiu} for the relevant results. We expect  that  \eqref{mink ineq}  holds true  for general capillary hypersurfaces without any convexity assumption by replacing  $\int_\S H dA$ with $\int_\S |H|dA$. 

We end  the introduction by mentioning some interesting results on curvature flows with free or capillary boundaries in the Euclidean space. The classical mean curvature flow with free boundary was studied by Stahl in \cite{Sta1}, which showed that evolving a strictly convex, embedded hypersurface in the interior of a ball or half-space by mean curvature flow with free boundary produces a Type I singularity which, after rescaling, is asymptotically hemispherical. See also \cite{Buck,EHIZ} for  further analysis of the behavior of singularities that develop on the free boundary of the evolving hypersurface. Lambert-Scheuer \cite{LS16} proved that the inverse mean curvature flow for hypersurfaces with free boundary in a ball would drive a strictly convex hypersurfaces into a flat disk. Recently,  Wang-Xia \cite{WX20} studied a locally constrained mean curvature type flow for embedded hypersurface with free boundary in the unit ball and showed that  it converges to a spherical cap, which motivated by a new Minkowski formula in \cite{WX19}. See also \cite{WW20} for the capillary case.  Furthermore, Scheuer-Wang-Xia \cite{SWX} introduced a new inverse  nonlinear curvature flow for embedded hypersurfaces with free boundary in the ball and obtained a class of new Alexandrov-Fenchel inequalities after obtaining  that flow converges to a spherical cap. See also \cite{WWX2022,WeX21} for the very recent development of hypersurfaces with capillary boundary.

\vspace{.2cm}
 \textit{The rest of the article is structured as follows.} In Section \ref{sec2},  some notations  and known results   about capillary  hypersurfaces in the half space are introduced, including the relevant evolution equations. 
In Section \ref{sec3}, we first obtain the uniform estimates for flow \eqref{flow with capillary} and  prove its long-time existence and smooth
 convergence in Section \ref{sec3.1}, then we  complete the proof of Theorem \ref{thm 1.1}. In Section \ref{sec3.2}, we prove the convexity preserving along flow \eqref{flow with capillary}, which implies  the  Alexandrov-Fenchel inequalities, that is, Theorem \ref{thm1.4}.
\vspace{.2cm}

	\section{Capillary hypersurface in half-space}\label{sec2}
   Let  $\S\subset\bar{\RR}^{n+1}_+$ be a smooth, properly embedded  capillary hypersurface, given by the embedding $x:M\to \bar{\RR}^{n+1}_+$, where $M$ is a compact, orientable smooth manifold of dimension $n$ with non-empty boundary. 
   If there is no confusion, we do not distinguish $\Sigma$ 
 and the embedding $x$.
   Let $\mu$ be the unit outward co-normal of $\p\S$ in $\S$ and  $\overline{\nu}$ be the unit normal to $\partial\Sigma$ in $\partial\mathbb{R}^{n+1}_+$ such that $\{\nu,\mu\}$ and $\{\overline{\nu},e\}$ have the same orientation in normal bundle of $\partial\Sigma\subset \bar{\mathbb{R}}^{n+1}_+$, where $e:=-E_{n+1}$. We define 
the contact angle $\theta$ between  the hypersurface $\Sigma$ and the support $\partial\bar{\R}^{n+1}_+$ by
$$\< \nu, e \>=\cos (\pi-\theta).$$
It  follows  
\begin{eqnarray}\label{co-normal bundle}
	\begin{array} {rcl}
		e &=&\sin\theta \mu-\cos\theta \nu,
		\\
		\overline{\nu} &=&\cos\theta \mu+\sin\theta \nu.
	\end{array}
\end{eqnarray}
We use $D$ to denote the Levi-Civita connection of $\bar{\RR}^{n+1}_+$ w.r.t the Euclidean metric $\delta$, 
and $\n$  the Levi-Civita connection on $\S$ w.r.t the induced metric $g$ from the immersion $x$. 
The operator $\div, \Delta$, and $\n^2$ are  the divergence, Laplacian, and Hessian operator on $\S$ respectively.
The second fundamental form $h$ of $x$  is defined by
$$D_{X}Y=\n_{X}Y- h(X,Y)\nu.$$ 
Denote $\k=(\k_1, \k_2,\cdots, \k_n)$ be the set of principal curvatures, i.e, the set of eigenvalues of $h$. For other notations, we following the paper \cite{WWX2022}.

The second fundamental form of $\p \S$ in  $\RR^n$ is given by $$\widehat{h}(X, Y):= -\<\nabla^{\RR^n}_X Y, \bar \nu\>= -\<D_X Y, \bar \nu\>, \quad X, Y\in T(\p\S).$$
The second equality holds since $\<\bar \nu, \bar N\circ x\>=0$.
The second fundamental form of $\p \S$ in $\S$ is given by
$$\widetilde{h}(X, Y):= -\<\nabla_X Y, \mu\>= -\<D_X Y, \mu\>, \quad X, Y\in T(\p\S).$$
The second equality holds since $\<\nu, \mu\>=0$. Besides, $\widehat{h}, \widetilde{h}$  and $h$ have some nice relationships, cf. \cite[Proposition 2.3]{WWX2022}.

	Let $\Sigma_{t}$ be a family of smoothly embedded hypersurfaces with $\theta$-capillary boundary in $\bar{\RR}^{n+1}_{+}$, given by the embeddings $x(\cdot, t): M\rightarrow \bar{\RR}^{n+1}_{+}$, which evolves by the general flow
	\begin{eqnarray}\label{general flow}
		\partial_{t}x=f\nu+T,
	\end{eqnarray}
	with  some normal speed function $f$  and $T\in  T\Sigma_{t}$.  In this paper, $f$ consists mainly of mean curvature, and hence \eqref{general flow} is a mean curvature type flow, where $T$ is added such that flow preserves the capillary condition. 
	
Along flow \eqref{general flow}, we have the following evolution equations for the induced metric $g_{ij}$, 
the unit outward normal $\nu$, the second fundamental form $h_{ij}$, the Weingarten
matrix  $h^i_j$,  the mean curvature $H$ of the hypersurfaces $\Sigma_t$.  These evolution equations have been shown in \cite[Proposition 2.11]{WeX21} and will be used in deriving the a priori estimates.	
	\begin{prop}[\cite{WeX21}]\label{basic evolution eq}
		Along flow \eqref{general flow}, it holds that
		\begin{enumerate} 
			\item $\p_t g_{ij}=2fh_{ij}+\n_i T_j+\n_jT_i$.
			\item $\p_t\nu =-\n f+h(e_i,T)e_i$.	
			\item $\p_t h_{ij}=-\n^2_{ij}f +fh_{ik}h_{j}^k +\n_T h_{ij}+h_{j}^k\n_iT_k+h_{i}^k\n_j T_k.$
			\item $\p_t h^i_j=-\n^i\n_{j}f -fh_{j}^kh^{i}_k+\n_T h^i_j.$
			\item $\p_t H=-\Delta f-|h|^2 f+ \n_T H$.
		\end{enumerate}
	\end{prop}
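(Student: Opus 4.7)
The plan is to derive all five identities by direct computation from the definitions of the geometric quantities combined with the flow equation $\p_t x = f\nu + T$. Because the parameter space $M$ is fixed in time, $\p_t$ commutes with coordinate derivatives $\p_i$, and the Gauss--Weingarten formulas (with the sign convention $D_i \p_j x = \n_i \p_j x - h_{ij}\nu$, and its dual $D_i \nu = h_i^k \p_k x$ obtained by differentiating $\<\nu, \p_j x\>=0$) will convert all ambient derivatives into tensorial expressions intrinsic to $\S_t$. For (1), I would differentiate $g_{ij} = \<\p_i x, \p_j x\>$ in $t$, split $\p_i(f\nu+T)$ into normal and tangential pieces using Weingarten, and read off $2f h_{ij} + \n_i T_j + \n_j T_i$. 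For (2), I would use that $\p_t\nu$ is tangential since $|\nu|^2\equiv 1$ and recover its components by differentiating $\<\nu, \p_j x\> = 0$ in $t$, then substituting the flow equation.

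The core step is the evolution of $h_{ij}$. Starting from $h_{ij} = -\<D_i \p_j x,\nu\>$, I would differentiate in $t$, commute $\p_t$ past $\p_i$, and substitute the flow. The $f\nu$-contribution will produce $-\n^2_{ij} f + f h_{ik} h^k_j$ after using the Weingarten and Codazzi identities to express second derivatives of $\nu$ in terms of $h$ and $\n h$. The $T$-contribution must then be reassembled into the Lie derivative of the $(0,2)$-tensor $h$ along $T$, namely $\L_T h_{ij} = \n_T h_{ij} + h_j^k \n_i T_k + h_i^k \n_j T_k$, which is exactly the form appearing in (3).

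Identities (4) and (5) are then bookkeeping corollaries of (1) and (3): raising an index via $h^i_j = g^{ik} h_{kj}$ and using $\p_t g^{ik} = -g^{ip} g^{kq}(\p_t g_{pq})$ gives (4), while tracing gives (5), where the cross terms between $\p_t g^{ij}$ and $h_{ij}$ combine with the $f h_{ik}h^k_j$ term from (3) to yield the final signs $-f h_j^k h^i_k$ in (4) and $-|h|^2 f$ in (5). I expect the main obstacle to be purely combinatorial: one must carefully identify which $T$-derivative pieces re-assemble into $\n_T$ acting on the respective tensor, and keep track of the sign conventions. No boundary term enters in the present computation — these are interior evolution equations — so the capillary condition $\<\nu,e\>=-\cos\theta$ plays no role here; it will intervene only later when these evolution equations are combined with the Neumann-type condition on $\p M$ to derive boundary identities needed for the a priori estimates.
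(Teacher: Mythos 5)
The paper itself does not prove this proposition --- it is quoted verbatim from \cite[Proposition 2.11]{WeX21} --- so a first-principles computation like yours is the right route, and for items (1), (2), (3) and (5) your outline is the standard derivation and goes through exactly as you describe: the Gauss--Weingarten splitting of $\p_i(f\nu+T)$ (your sign conventions match the paper's $D_XY=\n_XY-h(X,Y)\nu$, $D_i\nu=h_i^k\p_kx$), the Codazzi identity to turn $(\n_ih)(T,\cdot)_j$ into $\n_Th_{ij}$, and the trace bookkeeping for $H$.

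The one step that does not come out the way you assert is item (4). Raising the index carefully, with $\p_tg^{ik}=-g^{ip}g^{kq}\p_tg_{pq}=-2fh^{ik}-\n^iT^k-\n^kT^i$, item (3) yields
\[
\p_t h^i_j=-\n^i\n_j f-f\,h^i_kh^k_j+\n_T h^i_j+h^i_k\n_jT^k-h^k_j\n_kT^i,
\]
i.e.\ the tangential contribution is the full Lie derivative of the $(1,1)$-tensor $h^i_j$, not merely $\n_Th^i_j$: only one of the two $T$-gradient terms produced by $\p_tg^{ik}$ is cancelled by the corresponding term of (3). The leftover $h^i_k\n_jT^k-h^k_j\n_kT^i$ does not vanish pointwise for a general tangential field $T$; in a frame diagonalizing $h$ its $(i,j)$-entry is $(\kappa_i-\kappa_j)\n_jT^i$, which vanishes on the diagonal but not off it. So your claim that the index-raising ``gives (4)'' as literally stated needs a caveat: either record these two extra terms, or explain why dropping them is harmless in this paper --- they are trace-free (which is why (5) holds exactly as written), and in the only other place item (4) is invoked, the computation of $\p_t\bar H=-b^i_k(\p_th^k_l)b^l_i$ in Proposition \ref{evo of bar H}, contracting $h^k_m\n_lT^m-h^m_l\n_mT^k$ with $b^i_k(\cdot)b^l_i$ makes them cancel as well. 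Apart from this point your proposal is sound and is, in substance, the same computation as in the cited source.
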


\section{A priori estimate}\label{sec3}

In this section, we obtain the uniform a priori estimates for flow \eqref{flow with capillary}.

\subsection{Further evolution equations}  We need evolution equations  for various geometric quantities.
 For simplicity, we introduce the linearized operator with respect to \eqref{flow with capillary} as
\begin{eqnarray*}
	\mathcal{L}:=\partial_{t}-\<x, \nu\>\Delta -\<T+Hx-n\cos\theta e, \nabla\>.
\end{eqnarray*}
We derive the evolution equations for various geometric quantities
under flow \eqref{flow with capillary}. That is,  the normal speed function in \eqref{general flow} is chosen to be $$f:=n\left(1+\cos\theta\<\nu, e\>\right)-H\<x, \nu\>.$$
	\begin{prop}
		Along flow \eqref{flow with capillary},   the support function $u=\<x, \nu\>$ satisfies
		\begin{eqnarray}\label{evo of u}
			\mathcal{L}u= n+n\cos\theta\<\nu, e\>-2uH +u^{2}|h|^{2},
		\end{eqnarray}
		and 
		\begin{eqnarray}\label{deri of u}
			\nabla_{\mu}u=\cot\theta h(\mu, \mu)u, \quad {\rm on}~\partial\Sigma_{t}.
		\end{eqnarray}
	\end{prop}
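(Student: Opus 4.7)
The evolution identity is a computation modeled on the closed case, the only new ingredient being that the tangential piece $T$ appearing in the flow formulation of Proposition \ref{basic evolution eq} gets absorbed cleanly into the definition of $\mathcal{L}$. For the boundary formula, the key point is that the capillary condition forces $\mu$ to be a principal direction of $\Sigma_t$ at $\partial\Sigma_t$.

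\textbf{Step 1: the bulk formula \eqref{evo of u}.} Set $V:=x^T=x-u\nu$. Using $D_i\nu=h_i^{\,k}e_k$, a direct computation gives the standard identity
\begin{equation*}
\nabla_i u=h_i^{\,k}V_k,
\end{equation*}
and, differentiating once more and applying Codazzi $\nabla_j h_i^{\,k}=\nabla^k h_{ij}$, one obtains
\begin{equation*}
\Delta u=\langle V,\nabla H\rangle+H-u|h|^2=\langle x,\nabla H\rangle+H-u|h|^2,
\end{equation*}
since $\nabla H$ is tangent. Next I use Proposition \ref{basic evolution eq}(2), which yields
\begin{equation*}
\partial_t u=\langle \partial_t x,\nu\rangle+\langle x,\partial_t\nu\rangle=f-\langle V,\nabla f\rangle+h(V,T),
\end{equation*}
while the tangential term $h(V,T)=T^i h_i^{\,k}V_k=\langle T,\nabla u\rangle$. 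Expanding $\nabla f$ via $f=n+n\cos\theta\langle \nu,e\rangle-Hu$ and $\nabla_i\langle \nu,e\rangle=h_i^{\,k}\langle e_k,e\rangle$, the cross-terms $n\cos\theta\,h(V,e^T)$ and $Hh(V,V)$ appearing in $\langle V,\nabla f\rangle$ are exactly cancelled by the drift terms $\langle Hx-n\cos\theta e,\nabla u\rangle=Hh(V,V)-n\cos\theta\,h(e^T,V)$ in $\mathcal{L}$. What is left after this cancellation is
\begin{equation*}
\mathcal{L}u=f-uH+u^2|h|^2=n+n\cos\theta\langle \nu,e\rangle-2uH+u^2|h|^2,
\end{equation*}
as claimed.

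\textbf{Step 2: the Neumann-type identity \eqref{deri of u}.} On $\partial\Sigma_t$, the capillary condition $\langle \nu,e\rangle=-\cos\theta$ is constant, so differentiating along any $Y\in T\partial\Sigma_t$ and using $e=\sin\theta\,\mu-\cos\theta\,\nu$ (see \eqref{co-normal bundle}) gives
\begin{equation*}
0=\nabla_Y\langle \nu,e\rangle=\langle D_Y\nu,e\rangle=\sin\theta\,h(Y,\mu).
\end{equation*}
Hence $h(\mu,Y)=0$ for every $Y\in T\partial\Sigma_t$, i.e.\ $\mu$ is a principal direction along $\partial\Sigma_t$. On the other hand, $\partial\Sigma_t\subset\partial\bar\RR^{n+1}_+=\{\langle x,e\rangle=0\}$, so $\langle x,e\rangle=0$ on $\partial\Sigma_t$; combined with \eqref{co-normal bundle} this yields $\langle x,\mu\rangle=\cot\theta\,\langle x,\nu\rangle=\cot\theta\,u$. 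Therefore, decomposing $V=x-u\nu=\cot\theta\,u\,\mu+x'$ with $x'\in T\partial\Sigma_t$ and applying $\nabla_\mu u=h(\mu,V)$ together with $h(\mu,x')=0$,
\begin{equation*}
\nabla_\mu u=\cot\theta\,u\,h(\mu,\mu),
\end{equation*}
which is exactly \eqref{deri of u}.

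\textbf{Where the subtlety lies.} The computation itself is routine once the right objects are in place; the genuinely useful observation is the cancellation in Step 1, which is precisely what motivates the choice of the operator $\mathcal{L}$ — the drift $T+Hx-n\cos\theta\,e$ is designed to absorb both the tangential reparametrization $T$ and the gradient of the speed function $f$ that is not already accounted for by $u\Delta$. The only boundary input needed is the ``principal direction'' property of $\mu$, which comes for free from the capillary condition.
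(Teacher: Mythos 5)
Your proof is correct and follows essentially the same route as the paper: compute $\nabla u$, $\Delta u$ and $\partial_t u$ from Proposition \ref{basic evolution eq}, observe that the drift $T+Hx-n\cos\theta\,e$ in $\mathcal{L}$ absorbs the tangential and cross terms, and on the boundary use that $\mu$ is a principal direction together with $\langle x,\mu\rangle=\cot\theta\,\langle x,\nu\rangle$. The only (harmless) difference is that you re-derive the principal-direction property of $\mu$ from the constant-angle condition instead of citing \cite[Proposition 2.3]{WWX2022}, as the paper does.
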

	\begin{proof}
		Direct computation yields
		\begin{eqnarray*}
			\nabla_{i}u=h_{ik}\<x, e_{k}\>,
		\end{eqnarray*}
		and
		\begin{eqnarray*}
			\nabla_{ij}u= h_{ij;k}\<x, e_{k}\>+h_{ij}-u(h^{2})_{ij},
		\end{eqnarray*}
	then  
		\begin{eqnarray*}
			\Delta u=\<x, \nabla H\>+H-u|h|^{2}.
		\end{eqnarray*}
		In view of  these formulas  and Proposition \ref{basic evolution eq}, we have
		\begin{eqnarray*}
			\partial_{t}u&=&\<\p_{t}x, \nu\>+\<x, \partial_{t}\nu\>=f-\<x, \nabla f\>+h(x^{T}, T)\\
			&=&\left(n+n\cos\theta\<\nu, e\>-uH\right)-n\cos\theta \left<x, \nabla \<\nu, e\>\right>+u\<x, \nabla H\>\\
			&&+\<Hx, \nabla u\>+h(x^{T}, T)\\
			&=&\<x, \nu\>\Delta u-n\<\cos\theta e, \nabla u \>+\<Hx, \nabla u\>+ \left(n+n\cos\theta\<\nu, e\>-2uH\right)\\
			&&+u^{2}|h|^{2}+\<T, \nabla u\>,
		\end{eqnarray*}
	that is,
		\begin{eqnarray*}
			\mathcal{L}u= n+n\cos\theta\<\nu, e\>-2uH +u^{2}|h|^{2}.
		\end{eqnarray*}
		On $\partial\Sigma_{t}$, since $\mu$ is a principal direction of $\S$ by \cite[Proposition 2.3]{WWX2022}, together with  \eqref{co-normal bundle}, it yields
		\begin{eqnarray*}
			\nabla_{\mu}\<x, \nu\>=\<x, h(\mu, \mu)\mu\>=\cos\theta h(\mu, \mu)\<x, \bar{\nu}\>=
			\cot\theta h(\mu, \mu)\<x, \nu\>.
		\end{eqnarray*}
	\end{proof}
For later use, now we introduce a new function.
	\begin{defn} We call  the function defined by
	\begin{eqnarray}\label{relative support}
		\bar u=\frac{\<x, \nu\>}{1+\cos\theta \<\nu,e\>},
	\end{eqnarray} 
the	{\rm capillary support function} of capillary hypersurface $\Sigma$ in $\bar\RR^{n+1}_+$.
	\end{defn}
	
It is interesting to see that the capillary support function has nice properties.
\begin{prop}\label{evo of relative u}
	Along   flow \eqref{flow with capillary},  the capillary support function  $\bar u$ satisfies
\begin{eqnarray}\label{evo of bar u}
\L \bar u=n-2\bar u H+\bar u^2 |h|^2+ 2u \left\<\n \bar u,\frac{\n (1+\cos\theta \<\nu,e\>)}{1+\cos\theta \<\nu,e\>} \right\>,
\end{eqnarray}
and  \begin{eqnarray}\label{neumann of bar u}
	\n_{\mu} \bar u=0, \quad \text{ on } \p\S_t.
\end{eqnarray}  
\end{prop}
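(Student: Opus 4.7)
The plan is to write $\bar u=u/\phi$ with $\phi:=1+\cos\theta\,\<\nu,e\>$, derive the action of $\mathcal{L}$ on $w:=\<\nu,e\>$ (and hence on $\phi$) separately, and then combine via a quotient rule for the linearized operator.

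The key preliminary step is to establish $\mathcal{L}w=u|h|^2 w$. Using $D_i\nu=h_{ij}e_j$ and the Gauss-Weingarten relations one obtains $\nabla_i w=h_{ik}\<e_k,e\>$ and $\nabla_j\nabla_i w=h_{ij;k}\<e_k,e\>-h_{ik}h_{jk}\,w$, so that $\Delta w=\<\nabla H,e\>-|h|^2 w$ by Codazzi. Proposition~\ref{basic evolution eq} yields $\partial_t w=-\<\nabla f,e\>+\<T,\nabla w\>$, where the tangential contribution from $\partial_t\nu$ is converted into $\<T,\nabla w\>$ using the symmetry of $h$. Substituting $f=n\phi-Hu$ and $\<\nabla H,e\>=\Delta w+|h|^2 w$, the first-order drift terms in $\mathcal{L}$ cancel exactly those coming from $\partial_t w$, leaving
\[\mathcal{L}w=H\bigl(\<\nabla u,e\>-\<x,\nabla w\>\bigr)+u|h|^2 w.\]
The crucial observation is the symmetry identity $\<\nabla u,e\>=\<x,\nabla w\>$, which is immediate from $\nabla_i u=h_{ik}\<x,e_k\>$, $\nabla_i w=h_{ik}\<e,e_k\>$ and $h_{ik}=h_{ki}$; hence $\mathcal{L}w=u|h|^2 w$, and since $\phi$ is affine in $w$, $\mathcal{L}\phi=\cos\theta\,\mathcal{L}w=u|h|^2(\phi-1)$.

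With these two building blocks, the general quotient rule
\[\mathcal{L}\!\left(\tfrac{u}{\phi}\right)=\tfrac{\mathcal{L}u}{\phi}-\tfrac{u\,\mathcal{L}\phi}{\phi^2}+2u\,\Bigl\<\nabla\!\left(\tfrac{u}{\phi}\right),\tfrac{\nabla\phi}{\phi}\Bigr\>,\]
in which the coefficient $2u$ appears because $u=\<x,\nu\>$ is itself the coefficient of $-\Delta$ in $\mathcal{L}$, combined with \eqref{evo of u} and the expression for $\mathcal{L}\phi$ above, yields \eqref{evo of bar u} after the cross term $u\bar u|h|^2$ cancels between $\mathcal{L}u/\phi$ and $u\mathcal{L}\phi/\phi^2$ (using $u/\phi=\bar u$ and $u^2/\phi=u\bar u$). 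For the boundary condition \eqref{neumann of bar u}, on $\partial\Sigma_t$ one has $w=-\cos\theta$ and so $\phi=\sin^2\theta$; the fact that $\mu$ is a principal direction of $\Sigma$ at $\partial\Sigma_t$ (\cite[Proposition~2.3]{WWX2022}) together with $\<\mu,e\>=\sin\theta$ from \eqref{co-normal bundle} gives $\nabla_\mu w=h(\mu,\mu)\sin\theta$, whence $\nabla_\mu\phi=\cos\theta\sin\theta\,h(\mu,\mu)$. Plugging this and \eqref{deri of u} into $\nabla_\mu\bar u=\nabla_\mu u/\phi-u\nabla_\mu\phi/\phi^2$ yields $0$ by direct simplification.

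The main technical point is the first step, where one must carefully track how the first-order terms produced by expanding $\nabla f$ interact with the drift $T+Hx-n\cos\theta\,e$ in $\mathcal{L}$, and then spot the symmetry identity that makes the $H$-contribution vanish. Everything else is essentially algebraic bookkeeping.
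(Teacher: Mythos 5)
Your proposal is correct and takes essentially the same route as the paper: it first establishes $\mathcal{L}\<\nu,e\>=u|h|^2\<\nu,e\>$ from the Codazzi identity and the cancellation of the drift terms, then applies the quotient rule for $\mathcal{L}$ together with \eqref{evo of u}, and handles the boundary condition via the principal-direction property and \eqref{deri of u}, exactly as in the paper's proof.
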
	

\begin{proof}
From the Codazzi formula, we have	\begin{eqnarray}\label{eq_b}
		\langle \nu,e\rangle_{;kl}= h_{kl;s}\langle e_s,e\rangle -(h^2)_{kl}\langle \nu,e\rangle ,
	\end{eqnarray}
	which yields
	\begin{eqnarray}
		\Delta\<\nu,e\>= \<\n H,e\>-|h|^2 \<\nu,e\>.
	\end{eqnarray}
Combining  it with
\begin{eqnarray*}
	\<\n f,e\>=n\cos\theta h(e^T,e^T)-\<x,\nu\>\< \n H,e\>-Hh(x^T,e^T),
\end{eqnarray*}
we obtain
\begin{eqnarray*}
 \p_t \<\nu,e\>&=& -\<\n f,e\>+h(e_i,T)\<e_i,e\>
 \\&=&-n\cos\theta h(e^T,e^T)+u\<\n H,e\>+Hh(x^T,e^T)+h(T,e^T)
 \\&=& u\left( \Delta \<\nu,e\>+|h|^2\<\nu,e\>\right)-n\cos\theta h(e^T,e^T) +Hh(x^T,e^T)+h(T,e^T).
\end{eqnarray*}
Thus we conclude that
\begin{eqnarray}\label{ev of nu e}
	\L \<\nu,e\>= u|h|^2 \<\nu,e\>.
\end{eqnarray}
Recall $\bar u:=\frac{u}{1+\cos\theta \<\nu,e\>}$.   \eqref{evo of u} and \eqref{ev of nu e} yield
\begin{eqnarray*}
	\L \bar u&=& \frac{1}{1+\cos\theta \<\nu,e\>} \L u -\frac{\<x,\nu\>}{(1+\cos\theta\<\nu,e\>)^2} \L (1+\cos\theta \<\nu,e\>) \\&&-\<x,\nu\>\left(\frac{2u|\n(1+\cos\theta \<\nu,e\>)|^2}{(1+\cos\theta\<\nu,e\>)^3 }-2\frac{ \<x,\nu_{;i}\> \cos\theta \<\nu_{;i},e\>}{(1+\cos\theta\<\nu,e\>)^2}\right)
	\\&=&n-\frac{2uH}{1+\cos\theta \<\nu,e\>}+\frac{u^2 |h|^2}{(1+\cos\theta\<\nu,e\>)^2}+ 2u \left\<\n \bar u,\frac{\n (1+\cos\theta \<\nu,e\>)}{1+\cos\theta \<\nu,e\>} \right\>	.
\end{eqnarray*}
Since $\mu$ is a principal direction of $\S$ by \cite[Proposition 2.3]{WWX2022},
we have on $\p \Sigma_t$ 
		\begin{eqnarray}\label{deri of nu e}
	\nabla_{\mu}\left(1+\cos\theta\<\nu, e\>\right)=-\sin\theta h(\mu, \mu)\<\nu, e\>=\cot\theta h(\mu,\mu) (1+\cos\theta \<\nu,e\>),
\end{eqnarray} which, together with \eqref{deri of u}, implies \begin{eqnarray*}
	\n_{\mu} \bar u=0.
\end{eqnarray*}
	\end{proof}	
Now we compute the evolution equation for the mean curvature $H$.
	\begin{prop}\label{evol of H}
		Along flow \eqref{flow with capillary}, we have
		\begin{eqnarray}\label{evo of H}
			\mathcal{L}H=2\<\n H, \n u\>-n|h|^{2}+H^{2},
		\end{eqnarray}
		and
		\begin{eqnarray}\label{deri of H}
			\nabla_{\mu}H=0,\quad{\rm on }~\partial\Sigma_{t}.
		\end{eqnarray}
	\end{prop}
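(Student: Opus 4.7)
The plan is, for the evolution equation \eqref{evo of H}, to substitute the speed $f = n + n\cos\theta\langle\nu,e\rangle - Hu$ into Proposition \ref{basic evolution eq}(5), $\partial_t H = -\Delta f - |h|^2 f + \langle T,\nabla H\rangle$, and to compute $\Delta f$ piece by piece. For $\Delta\langle\nu,e\rangle$ I would trace \eqref{eq_b} to get $\Delta\langle\nu,e\rangle = \langle\nabla H, e\rangle - |h|^2\langle\nu,e\rangle$, while for $\Delta(Hu)$ I would apply Leibniz together with the identity $\Delta u = \langle x,\nabla H\rangle + H - u|h|^2$ already derived in the proof of \eqref{evo of u}.

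The essential observation after assembling everything is a pair of cancellations: the terms $n\cos\theta|h|^2\langle\nu,e\rangle$ coming from $-\Delta(n\cos\theta\langle\nu,e\rangle)$ and from $-|h|^2 f$ cancel, and likewise the $\pm Hu|h|^2$ terms from $\Delta(Hu)$ and $-|h|^2 f$ cancel. The remaining first-order terms $-n\cos\theta\langle\nabla H, e\rangle + H\langle x,\nabla H\rangle + \langle T,\nabla H\rangle$ regroup precisely as $\langle T + Hx - n\cos\theta e, \nabla H\rangle$, which together with the $u\Delta H$ piece extracted from $-\Delta(Hu)$ reconstitutes the operator $\mathcal{L}H$ on the left. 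What survives on the right is exactly $2\langle\nabla H,\nabla u\rangle + H^2 - n|h|^2$. This step is mechanical, not conceptually hard.

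For the Neumann condition \eqref{deri of H} my plan is to exploit preservation of the capillary angle. Since $\partial\Sigma_t\subset\partial\bar\R^{n+1}_+$, one must have $\langle\partial_t x, e\rangle = 0$ on $\partial M$; combined with $\langle\mu,e\rangle = \sin\theta$ and $\langle e_\alpha, e\rangle = 0$ for $e_\alpha\in T\partial\Sigma$, this forces the $\mu$-component of the tangential field $T$ to equal $f\cot\theta$ on $\partial\Sigma_t$. Feeding this, together with $h_{\mu\alpha}=0$ from \cite[Proposition 2.3]{WWX2022}, into Proposition \ref{basic evolution eq}(2) and using $\partial_t\langle\nu,e\rangle = 0$ on $\partial M$, I obtain the boundary identity $\nabla_\mu f = h_{\mu\mu}\, f \cot\theta$ on $\partial\Sigma_t$. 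Expanding both sides using $f = n\sin^2\theta - Hu$ on the boundary, $\nabla_\mu\langle\nu,e\rangle = h_{\mu\mu}\sin\theta$ (from $\nabla_i\langle\nu,e\rangle = h_{ij}\langle e_j,e\rangle$), and \eqref{deri of u}, the terms $n\sin\theta\cos\theta\, h_{\mu\mu}$ and $Hu\cot\theta\, h_{\mu\mu}$ drop out and I am left with $-u\nabla_\mu H = 0$; since star-shapedness yields $u>0$, this gives $\nabla_\mu H = 0$. The one conceptual point, and where I expect the only real obstacle, is recognizing that preservation of the capillary angle forces precisely the right boundary identity for $\nabla_\mu f$ to collapse into the clean Neumann condition for $H$.
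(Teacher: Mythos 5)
Your proposal is correct and takes essentially the same route as the paper: the interior computation is the identical substitution of $f=n+n\cos\theta\langle\nu,e\rangle-H\langle x,\nu\rangle$ into $\partial_t H=-\Delta f-|h|^2f+\langle\nabla H,T\rangle$, with exactly the cancellations and regrouping you describe. For the boundary identity, the paper simply quotes $\nabla_\mu f=\cot\theta\, h(\mu,\mu)f$ from \cite[Proposition 4.3]{WWX2022} and concludes via the quotient $H=\bigl(n(1+\cos\theta\langle\nu,e\rangle)-f\bigr)/\langle x,\nu\rangle$, all three quantities satisfying the same Robin-type relation, whereas you re-derive that identity from preservation of the capillary angle (plus $h(\mu,e_\alpha)=0$) and then cancel directly using $u>0$ --- an equivalent, self-contained version of the same argument.
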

	\begin{proof}
	From \eqref{eq_b}
	it is easy to see
		\begin{eqnarray*}
			f_{;ij}&=&n\cos\theta h_{ij;k}\<e_{k}, e\>-n\cos\theta (h^{2})_{ij}\<\nu, e\>-H_{;ij}\<x, \nu\>-H_{;i}h_{jk}\<x, e_{k}\>\\
			&&-H_{;j}h_{ik}\<x, e_{k}\>-Hh_{ij}-Hh_{ij;k}\<x, e_{k}\>+H(h^{2})_{ij}\<x, \nu\>.
		\end{eqnarray*}
	Combining it with Proposition \ref{basic evolution eq} (6), we have
		\begin{eqnarray*}
			\partial_{t}H&=&-\Delta f-f|h|^{2}+\<\nabla H, T\>\\
			&=&-\<n\cos\theta e, \nabla H\>+n\cos\theta\<\nu, e\>|h|^{2}+\<x, \nu\>\Delta H
			+2H_{;i}h_{ik}\<x, e_{k}\>\\
			&&+\<Hx, \nabla H\>-uH|h|^{2}-(n+n\cos\theta\<\nu, e\>-Hu)|h|^{2}\\
			&&+\<\nabla H, T\>+H^{2}\\
			&=&\<x, \nu\>\Delta H+\<Hx+T-n\cos\theta e, \nabla H\>+2H_{;i}h_{ik}\<x, e_{k}\>-n|h|^{2}+H^{2},
		\end{eqnarray*}
	that is,
		\begin{eqnarray*}
			\mathcal{L}H=2\<\n H,\n u\>-n|h|^{2}+H^{2}.
		\end{eqnarray*}
		It has been shown in \cite[Proposition 4.3]{WWX2022}, on $\partial\Sigma_{t}$, 
		\begin{eqnarray*}
			\n_\mu f=\cot\theta h(\mu,\mu) f.
		\end{eqnarray*}
Combining with \eqref{deri of u} and \eqref{deri of nu e}, 
altogether implies
\begin{eqnarray*}
	\n_\mu H&=&\n_\mu \left(\frac{n(1+\cos\theta \langle \nu,e\rangle )-f}{\langle x,\nu\rangle }\right) =0. 
\end{eqnarray*}
	\end{proof} 
If  a hypersurface is strictly convex, we let  $(b^{ij})$ denote the inverse of $(h_{ij})$, and  set $$\bar H:=\sum_{i=1}^n \frac{1}{\k_i}=g_{ij}b^{ji},$$
which is the harmonic curvature. The harmonic curvature satisfies the following evolution equation,
together a nice boundary inequality, provided that $\theta \in (0, \frac \pi 2]$. It will be used to prove that the flow preserves the convexity.
 	\begin{prop}\label{evo of bar H}
		Along flow \eqref{flow with capillary}, $\bar H$ satisfies
		\begin{eqnarray}\label{evo of widetilde H}
	\mathcal{L} \bar H=-2u (b^{ii}) ^2b^{jj}h_{ij;k}^2-2b^{i}_kH_{;k}\<x,e_i\>-u|h|^2 \bar H  -H\bar H+n H\<x,\nu\>+n^2.
		\end{eqnarray}
	For $\theta\in (0,\frac{\pi}{2}]$, it holds
		\begin{eqnarray}\label{deri of Widetilde H}
			\nabla_{\mu}\bar{H}\leq 0, \quad {\rm on}~\partial\Sigma_{t}.
		\end{eqnarray}
	\end{prop}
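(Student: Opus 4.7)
The plan is to derive \eqref{evo of widetilde H} by a direct tensor computation, and to obtain \eqref{deri of Widetilde H} via a principal-frame calculation together with a universal boundary identity.

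For the evolution equation, I start from $b^{ij}h_{jk}=\delta^i_k$, which differentiates to
\[
\p_t b^{ij}=-b^{ik}(\p_t h_{kl})b^{lj},\qquad \n_m b^{ij}=-b^{ik}h_{kl;m}b^{lj}.
\]
Tracing the first relation via $\bar H=g_{ij}b^{ij}$ and substituting Proposition \ref{basic evolution eq}(4) with $f=n+n\cos\theta\<\nu,e\>-Hu$ splits $\p_t\bar H$ into a Hessian term in $\n^2 f$, an algebraic term $f\cdot b^{ik}(h^2)_{kl}b^{lj}g_{ij}=nf$ coming from the contraction $h^i_k b^{kl}=\delta^{il}$, and a transport term. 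The Hessian $\n^2 f$ is expanded through the Codazzi identity together with the formulas for $\n^2 u$ and $\n^2\<\nu,e\>$ already appearing in the proofs of \eqref{evo of u} and \eqref{ev of nu e}. The $-u\Delta\bar H$ contribution from $\mathcal L$ is then handled by the Simons-type identity obtained by differentiating $\n_m b^{ij}=-b^{ik}h_{kl;m}b^{lj}$ once more and contracting; this yields the negative quadratic-gradient piece $-2u(b^{ii})^2 b^{jj}h_{ij;k}^2$ in \eqref{evo of widetilde H}, with the Ricci-type reorderings producing the remaining algebraic pieces $-u|h|^2\bar H$, $-H\bar H$, $nH\<x,\nu\>$, and $n^2$. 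Collecting all tangential derivatives into $\<T+Hx-n\cos\theta\,e,\n\bar H\>$ recovers \eqref{evo of widetilde H}.

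For \eqref{deri of Widetilde H}, I fix a boundary point $p\in\p\Sigma_t$ and pick an orthonormal frame $\{e_1=\mu,e_2,\dots,e_n\}$ at $p$ that is principal for $h$; this exists because $h(\mu,e_\alpha)=0$ for $\alpha\ge 2$ by [WWX2022, Proposition 2.3], so $h$ already splits along $\mu$ and $T_p\p\Sigma$ and it remains to diagonalize the restriction. Then $\bar H=\sum_i\kappa_i^{-1}$ gives
\[
\n_\mu\bar H=-\sum_{i=1}^n\frac{h_{ii;\mu}}{\kappa_i^2}.
\]
For $\alpha\ge 2$, Codazzi yields $h_{\alpha\alpha;\mu}=h_{\alpha\mu;\alpha}$. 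Using the Gauss formulas $\n^\Sigma_{e_\alpha}\mu=\sum_\beta \widetilde h_{\alpha\beta}e_\beta$ and $\n^\Sigma_{e_\alpha}e_\alpha=\n^{\p\Sigma}_{e_\alpha}e_\alpha-\widetilde h_{\alpha\alpha}\mu$, together with $h(\mu,e_\gamma)=0$, one further obtains $h_{\alpha\mu;\alpha}=\widetilde h_{\alpha\alpha}(\kappa_\mu-\kappa_\alpha)$. The boundary condition $\n_\mu H=0$ from \eqref{deri of H}, equivalent to $h_{\mu\mu;\mu}=-\sum_\alpha h_{\alpha\alpha;\mu}$, then gives the closed form
\[
\n_\mu\bar H=-\sum_\alpha \widetilde h_{\alpha\alpha}\,\frac{(\kappa_\mu-\kappa_\alpha)^2(\kappa_\mu+\kappa_\alpha)}{\kappa_\mu^2\kappa_\alpha^2}.
\]

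The decisive step, which I expect to be the main content, is the identification $\widetilde h=\cot\theta\,h|_{T\p\Sigma}$, a universal identity valid for any $\theta$-capillary hypersurface in $\bar\R^{n+1}_+$. For $X,Y\in T\p\Sigma$, since $\p\Sigma$ lies in the flat hyperplane $\p\bar\R^{n+1}_+$, the Euclidean derivative $D_XY$ is horizontal, so $\<D_XY,e\>=0$; decomposing $e=\sin\theta\mu-\cos\theta\nu$ then forces $\sin\theta\,\widetilde h(X,Y)=\cos\theta\, h(X,Y)$, i.e.\ $\widetilde h=\cot\theta\,h|_{T\p\Sigma}$. Substituting $\widetilde h_{\alpha\alpha}=\cot\theta\,\kappa_\alpha$ and using strict convexity of $\Sigma$ (all $\kappa_i>0$), the expression simplifies to
\[
\n_\mu\bar H=-\cot\theta\sum_\alpha\frac{(\kappa_\mu-\kappa_\alpha)^2(\kappa_\mu+\kappa_\alpha)}{\kappa_\mu^2\kappa_\alpha}\le 0,
\]
the sign being controlled by $\cot\theta\ge 0$, which holds precisely when $\theta\in(0,\pi/2]$. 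This is the only point at which the restriction on the contact angle is used, and it delivers \eqref{deri of Widetilde H}.
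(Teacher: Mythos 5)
Your proposal is correct and takes essentially the same route as the paper: the evolution equation is the same computation via $\partial_t b^{ij}=-b^{ik}(\partial_t h_{kl})b^{lj}$, the Simons-type identity and the Hessian formulas for $\<x,\nu\>$ and $\<\nu,e\>$, and the boundary inequality is the same principal-frame argument combining $\nabla_\mu H=0$ with the sign of $\cot\theta$ and strict convexity. The only difference is that you re-derive the key boundary identity $h_{\alpha\alpha;\mu}=\cot\theta\, h_{\alpha\alpha}(h_{11}-h_{\alpha\alpha})$ from Codazzi and the relation $\widetilde h=\cot\theta\, h|_{T\partial\Sigma}$, whereas the paper cites it directly from \cite[Proposition 2.3]{WWX2022}.
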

	\begin{proof}
  Direct computation yields
		\begin{eqnarray*}
			\nabla_{k}\bar{H}=-b^{is}\n_k h_{st} b^t_i,
		\end{eqnarray*}
		and
		\begin{eqnarray}\label{Laplacian of bar H}
			\Delta\bar{H}&=&-b^{is}\Delta h_{st}b^t_i+2b^{ip} \n^k h_{pq}b^{qs} \n_k h_{st}b^t_i.
		\end{eqnarray}
The Ricci equation and the Codazzi   equation  yield\begin{eqnarray*}
		h_{kl;ii}&=&h_{ki;li}=h_{ki;il}+R^p_{ili}h_{pk}+R^p_{kli}h_{pi}
		\\&=&h_{ii;kl}+(h_{pl}H-h_{pi}h_{li})h_{pk}+(h_{pl}h_{ki}-h_{pi}h_{kl})h_{pi}
		\\&=&H_{;kl}+h_{pk}h_{pl}H-|h|^2 h_{kl},
	\end{eqnarray*} and hence
	\begin{eqnarray*}
	 H_{ ;ij}=\Delta (h_{ij })-H(h^2)_{ij}+|h|^2h_{ij}.
	\end{eqnarray*}
Together with \eqref{Laplacian of bar H}, it implies	\begin{eqnarray*}
b^i_kb^l_i	H_{ ;kl}&=&b^i_kb^l_i\big(\Delta (h_{kl })-H(h^2)_{kl}+|h|^2h_{kl}\big)
\\&=&  -\Delta \bar H+2(b^{ii})^2 b^{jj}h_{ij;k}^2  -nH+|h|^2 \bar H.
\end{eqnarray*}
On the other hand,
\begin{eqnarray*}
	 \p_t \bar H&=&\p_t b^i_i=-b^{i }_k\p_t h_{l}^k b^l_i
	 \\&=&-b^{i}_k\left(-\n^k\n_{l}f -fh_{l}^ph^{k}_p+\n_T h^k_l\right) b^l_i
	 \\&=&b^i_k b^l_i \n^k\n_l f+nf-b^i_k \n_T h^k_l b^l_i.
\end{eqnarray*}
Using again
\begin{eqnarray*}
 \langle x,\nu\rangle_{;kl}&=  h_{kl}+h_{kl;s}\langle x,e_s\rangle -(h^2)_{kl}\langle x,\nu\rangle ,
\end{eqnarray*}
and
\begin{eqnarray*}
 \langle \nu,e\rangle_{;kl}= h_{kl;s}\langle e_s,e\rangle -(h^2)_{kl}\langle \nu,e\rangle ,
\end{eqnarray*}
we obtain
\begin{eqnarray*}
	\n^k\n_l f&=& \n^k\n_l (n+n\cos\theta \<\nu,e\>-H\<x,\nu\>)
	\\&=&n\cos\theta  (h_{kl;s}\langle e_s,e\rangle -(h^2)_{kl}\langle \nu,e\rangle)-H_{;k}\<x,h_{ls}e_s\>-H_{;l}\<x,h_{ks}e_s\> \\&&-H_{;kl}\<x,\nu\>-H(h_{kl}+h_{kl;s}\langle x,e_s\rangle -(h^2)_{kl}\langle x,\nu\rangle),
\end{eqnarray*}
and
\begin{eqnarray*}
	\p_t \bar H&=&b^i_kb^l_i\Big[ n\cos\theta  \Big(h_{kl;s}\langle e_s,e\rangle -(h^2)_{kl}\langle \nu,e\rangle\Big)-H_{;k}\<x,h_{ls}e_s\>-H_{;l}\<x,h_{ks}e_s\> \\&&-H_{;kl}\<x,\nu\>-H\big(h_{kl}+h_{kl;s}\langle x,e_s\rangle -(h^2)_{kl}\langle x,\nu\rangle\big) \Big] +n^2(1+\cos\theta \<\nu,e\>)\\&&-nH\<x,\nu\> + \<T,\n\bar H\>
	\\&=& -u\big(  -\Delta \bar H+2(b^{ii})^2 b^{jj}h_{ij;k}^2  -nH+|h|^2 \bar H\big)
	 -n\cos\theta \<e,\n \bar H\>\\&&-n^2\cos\theta \<\nu,e\> -2b^i_kH_{;k}\<x,e_i\>    -H \bar H+H\<x,\n \bar H\>+nH\<x,\nu\>
\\&&+n^2(1+\cos\theta \<\nu,e\>)-nH\<x,\nu\> + \<T,\n\bar H\>.
\end{eqnarray*}
It  follows
\begin{eqnarray*}
	\mathcal{L} \bar H 
=-2u (b^{ii}) ^2b^{jj}h_{ij;k}^2-2b^{i}_kH_{;k}\<x,e_i\>-u|h|^2 \bar H  -H\bar H+n H\<x,\nu\>+n^2.
\end{eqnarray*}
Along $\p\S_t$, choosing an orthonormal frame $\{e_\alpha\}_{\alpha=2}^{n}$ of $T{\p\S_t}$ such that $\{e_1:=\mu,(e_\alpha)_{\alpha=2}^n\}$ forms an orthonormal frame for $T\S_t$, from \cite[Proposition 2.3 (3)]{WWX2022}, we know
		\begin{eqnarray*}
			h_{\alpha\alpha;\mu}=\cot\theta h_{\alpha\alpha}(h_{11}-h_{\alpha\alpha}),
		\end{eqnarray*} which,  together  with \eqref{deri of H} and $\theta\in(0,\frac{\pi}{2}]$, implies		\begin{eqnarray*}
			\nabla_{\mu}\bar{H}&=&-(b^{11})^{2}h_{11;\mu}-\sum_{\alpha=2}^n (b^{\alpha\alpha})^2 h_{\alpha\alpha;\mu}\\
			&=&\sum_{\alpha=2}^n\left[(b^{11})^{2}-(b^{\alpha\alpha})^{2}\right]h_{\alpha\alpha;\mu}\\
			&=&\cot\theta \sum_{\alpha=2}^n h_{\alpha\alpha}(h_{11}-h_{\alpha\alpha})\left[(b^{11})^{2}-(b^{\alpha\alpha})^{2}\right] \\&\leq& 0.
		\end{eqnarray*}
	\end{proof}

	\subsection{Gradient estimates and the convergence}\label{sec3.1}

	One can show the short-time existence of flow \eqref{flow with capillary} for any capillary hypersurface by using  the method given in \cite{HP1999}. Since we consider here only for capillary hypersurfaces of star-shaped, i.e., $\<x,\nu\>>0$ on $\S_0$, we can reduce flow to a scalar flow, i.e., \eqref{scalar flow with capillary},
	and hence the short-time existence follows easily.  For the convenience of the reader and for our use later, we present the reduction argument here.

	Assume that  a capillary hypersurfaces 
	$\S\subset \bar \RR^{n+1}_+$ is  strictly star-shaped with respect to the origin. 
	One can reparametrize it  as a graph over $\overline{\mathbb{S}}^n_+$. Namely,
	there exists a positive smooth function $\rho\in C^\infty$ defined on $\overline{\SS}^n_+$ such that
	\begin{eqnarray*}
		\S=\left\{ \rho (X )X |    X\in \overline{\SS}^n_+\right\},
	\end{eqnarray*}where $X:=(X_1,\ldots, X_n)$ is a local coordinate of $\overline{\SS}^n_+$. 

We  denote  $\n^0$ as the Levi-Civita connection on $\SS^n_+$ with respect to the standard round metric $\sigma:=g_{_{\SS^n_+}}$, $\p_i:=\p_{X_i}$,  $\sigma_{ij}:=\sigma(\p_{i},\p_{j})$,  $\rho_i:=\n^0_{i} \rho$, and $\rho_{ij}:=\n^0_{i}\n^0_j \rho$. 
Moreover we denote $\p_\beta$ the unit outward normal of $\p \SS^n_+$ in $\overline \SS^n_+$. The induced metric $g$ on $\S$ is given by
\begin{eqnarray*}
	g_{ij}=\rho^2\sigma_{ij}+ \rho_i\rho_j=e^{2\varphi}\left(\sigma_{ij}+\varphi_i\varphi_j\right),
\end{eqnarray*}where  $\varphi(X):=\log \rho(X)$. Its inverse $g^{-1}$ is given by
\begin{eqnarray*}
	g^{ij}=\frac{1}{\rho^2} \left(\sigma^{ij}-\frac{\rho^i\rho^j}{\rho^2+|\n^0 \rho|^2}\right)=e^{-2\varphi}\left( \sigma^{ij}-\frac{\varphi^i\varphi^j}{v^2}\right),
\end{eqnarray*}where $\rho^i:=\sigma^{ij}\rho_j$,  $\varphi^i:=\sigma^{ij}\varphi_j$ and $v:=\sqrt{1+|\n^0 \varphi|^2}$.
The unit outward normal vector field  on $\S$ is given by
\begin{eqnarray*}
	\nu=\frac{1}{v}\left( \p_\rho-\rho^{-2}\n^0  \rho \right)=\frac{1}{v}\left( \p_\rho-\rho^{-1}\n^0 \varphi\right).
\end{eqnarray*}
Hence the capillary condition, i.e. $\Sigma$ intersects with $\p\R^{n+1}_+$ at an angle $\theta$, is
expressed by
\begin{eqnarray*}
-\cos\theta=\<\nu, \frac{1}{\rho} \p_\beta\>=- \frac 1 v  \n^0_{\p_\beta }\varphi,
\end{eqnarray*}
which is equivalent to
$$\n^0_{\p_\beta }\varphi =\cos\theta \sqrt {1+|\n^0\varphi|^2}.
$$
The second fundamental form $h$ on $\S$ is
\begin{eqnarray*}
	h_{ij}=\frac{e^\varphi}{v }\left( \sigma_{ij}+\varphi_i\varphi_j-\varphi_{ij}\right),
\end{eqnarray*}and its Weingarten matrix $(h^i_j)$ is
\begin{eqnarray*}
	h_j^{i}=g^{ik}h_{kj}=\frac{1}{e^\varphi v }\left[ \delta^i_j-(\sigma^{ik}-\frac{\varphi^i\varphi^k}{v^2})\varphi_{kj}\right].
\end{eqnarray*}
We also have for the mean curvature,
\begin{eqnarray}
	H=\frac{1}{e^\varphi v}\left[ n-\left(\s^{ij}-\frac{\varphi^i\varphi^j}{v^2} \right)\varphi_{ij}\right].
\end{eqnarray}

Now under the assumption of star-shapedness, the 
 flow \eqref{flow with capillary} 
 can be reduced to 
 a  scalar  evolution equation for  $\varphi(z,t)=\log \rho (X(z,t),t)$, 
 \begin{eqnarray}\label{scalar flow with capillary}
	\begin{array}{rcll}
		\p_t \varphi &=&F,\quad &\text{ in } \SS^n_+\times [0,T^*),  \\
		\n_{\p_\beta}^0 \varphi&=&\cos\theta \sqrt{1+|\n^0\varphi|^2},\quad &\text{ on }
		\p\SS^n_+\times [0,T^*),\\
		\varphi(\cdot,0)&=&\varphi_0(\cdot),\quad &\text{ on } \SS^n_+,
	\end{array}
\end{eqnarray}
where $\varphi_0 $ is the parameterization radial function of $x_0(M)$ over $\overline{\SS}^n_+$, and
\begin{eqnarray*}
	F:=  \frac{1}{e^\varphi}\div_{_{\SS^n_+}}\left( \frac{\n^0\varphi}{v}\right) +\frac{v}{e^\varphi} \left[1- \frac{\cos\theta}{v}\left( \cos\beta+\sin\beta \n^0_{\p_\beta}\varphi \right)  -\frac{n}{v^2}\right].
\end{eqnarray*}
The short-time existence of this scalar flow follows from the standard PDE theory. It is clear that it provides 
the short-time existence of flow \eqref{flow with capillary}. Now assume $T^*>0$ is 
 the maximal time of  existence of a solution to \eqref{flow with capillary},
	in the class of star-shaped hypersurfaces.

 The star-shapedness of $\S_0$ implies that there exist some $0<r_1<r_2<\infty$, such that 
 $$\S_0\subset \widehat{C_{\theta,r_2}}\setminus \widehat{C_{\theta,r_1}}.$$
Following the argument in \cite[Proposition 4.2]{WWX2022}, which is based on the avoidable principle, we have 
\begin{prop}\label{c0 est}
	For any $t\in [0, T^*)$, along flow \eqref{flow with capillary}, it holds
	 \begin{eqnarray}\label{c0 bound}
	 	\Sigma_{t}\subset \widehat{C_{\theta,r_2}}\setminus\ \widehat{C_{\theta,r_1}}.
	 \end{eqnarray}
	 where $C_{\theta,r}$ defined by \eqref{sph-cap} and $r_{1}, r_{2}, c_{0}$ only depend on $\Sigma_{0}$.
\end{prop}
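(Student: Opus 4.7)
The plan is to prove this via the avoidance (comparison) principle, exploiting the fact that the spherical caps $C_{\theta,r}$ are stationary solutions of flow \eqref{flow with capillary} by \eqref{eq_a1}, so they serve as natural barriers. Because $\Sigma_0$ is star-shaped and the flow preserves star-shapedness (at least for a short time), we may work throughout with the equivalent scalar formulation \eqref{scalar flow with capillary} for $\varphi(X,t)=\log\rho(X,t)$ on $\overline{\SS}^n_+\times[0,T^*)$. Each $C_{\theta,r}$ is parameterized as a radial graph over $\overline{\SS}^n_+$ by some $\varphi_r\in C^\infty(\overline{\SS}^n_+)$ with $F(\varphi_r)\equiv 0$ and boundary condition $\n^0_{\p_\beta}\varphi_r=\cos\theta\sqrt{1+|\n^0\varphi_r|^2}$.

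Next, I would choose $r_1<r_2$ so that $\Sigma_0\subset \widehat{C_{\theta,r_2}}\setminus\widehat{C_{\theta,r_1}}$, equivalently $\varphi_{r_1}\le\varphi_0\le\varphi_{r_2}$ on $\overline{\SS}^n_+$; the existence of such $r_1,r_2$ depends only on $\Sigma_0$. I would then prove that $\varphi_{r_1}\le\varphi(\cdot,t)\le\varphi_{r_2}$ is preserved for all $t\in[0,T^*)$. To this end, set $w:=\varphi-\varphi_{r_2}$ and suppose for contradiction that $w$ attains a positive maximum at some $(X_0,t_0)$ with $t_0>0$. The operator $F$ is quasilinear uniformly elliptic on any $C^1$-bounded set, so writing $\p_t w = F(\varphi)-F(\varphi_{r_2})$ and linearizing between $\varphi$ and $\varphi_{r_2}$ yields a linear parabolic equation for $w$ to which the strong maximum principle applies.

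If the maximum is interior to $\SS^n_+$, the parabolic strong maximum principle gives an immediate contradiction with $w(\cdot,0)\le 0$. If $X_0\in\p\SS^n_+$, then I would apply Hopf's boundary point lemma: subtracting the two boundary conditions yields
\begin{eqnarray*}
\n^0_{\p_\beta}w=\cos\theta\,\frac{\langle \n^0\varphi+\n^0\varphi_{r_2},\n^0 w\rangle}{\sqrt{1+|\n^0\varphi|^2}+\sqrt{1+|\n^0\varphi_{r_2}|^2}},
\end{eqnarray*}
which is a linear oblique (in fact strictly oblique, transversal to $\p\SS^n_+$) boundary condition for $w$, so Hopf's lemma applied in this oblique form forces $\n^0_{\p_\beta}w(X_0,t_0)<0$ at a strict maximum, a contradiction. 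The same argument with $\varphi-\varphi_{r_1}$ and a minimum yields the lower bound, proving \eqref{c0 bound}.

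The main technical issue I anticipate is justifying the application of Hopf's lemma at the boundary, since the boundary operator is not a standard Neumann condition but an oblique one depending on $\n^0\varphi$; however, because the coefficient vector field $\p_\beta$ is strictly transversal to $\p\SS^n_+$ and the linearization above has bounded coefficients on $C^1$-bounded solutions, the oblique Hopf lemma of Lieberman-type applies. All of this is essentially the argument sketched in \cite[Proposition 4.2]{WWX2022}, which can be imported with only cosmetic changes since the stationary caps and boundary condition are identical.
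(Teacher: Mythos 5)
Your argument is essentially the paper's: the paper proves this by the avoidance principle, comparing $\Sigma_t$ with the static spherical caps $C_{\theta,r_1}$, $C_{\theta,r_2}$ exactly as in \cite[Proposition 4.2]{WWX2022}, which in the scalar formulation \eqref{scalar flow with capillary} is the barrier/maximum-principle argument you spell out. Only a minor bookkeeping remark: at a first boundary maximum of $w$ the tangential gradient vanishes, so your oblique condition reduces to $\sin^2\theta\,\n^0_{\p_\beta}w=0$, while the Hopf lemma gives a strictly \emph{positive} outward (oblique) derivative, which is the contradiction (not a negative one as you wrote).
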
 
	
Next we show the star-shapedness is preserved along flow \eqref{flow with capillary}, which follows  the uniform gradient estimate of $\varphi$ in \eqref{scalar flow with capillary}.
\begin{prop}	\label{c1-est}
	Let $\S_0$ be a star-shaped hypersurface with capillary boundary in $\bar \RR^{n+1}_+$, and $\theta \in(0,\pi)$, then there exists $c_0>0$ depending only on $\S_0$, such that 
	\begin{eqnarray}\label{c1 est}
		\langle x,\nu\rangle(p,t) \geq c_0.
	\end{eqnarray}for all $(p,t)\in M\times [0,T^*)$.
\end{prop}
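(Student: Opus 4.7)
The plan is to apply a parabolic minimum principle to the capillary support function $\bar u$ defined in \eqref{relative support}. The decisive advantage of $\bar u$ over $u$ is that it satisfies the \emph{homogeneous} Neumann condition $\n_\mu\bar u=0$ on $\p\S_t$ from \eqref{neumann of bar u}, whereas $u$ itself obeys the non-homogeneous oblique condition \eqref{deri of u}. This clean boundary behaviour of $\bar u$ is exactly what allows the estimate to hold in the full range $\theta\in(0,\pi)$.

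Since $\S_0$ is star-shaped, $u_0>0$ on $M$, and for any $\theta\in(0,\pi)$ we have $1+\cos\theta\<\nu,e\>\ge 1-|\cos\theta|>0$ everywhere, so $\bar u_0>0$ and $m_0:=\min_M\bar u_0>0$. Define $m(t):=\min_M\bar u(\cdot,t)$ and let $(p_0,t_0)$ realize this minimum for some $t_0\in(0,T^*)$. If $p_0$ is interior, then $\n\bar u(p_0,t_0)=0$ and $\De\bar u(p_0,t_0)\ge 0$. If $p_0\in\p M$, the boundary minimum gives zero tangential gradient, and \eqref{neumann of bar u} forces the co-normal derivative to vanish as well, so again $\n\bar u(p_0,t_0)=0$; in a local frame $\{\mu,e_2,\ldots,e_n\}$ at $p_0$, the tangential second derivatives are nonnegative, while $\p_{\mu\mu}\bar u(p_0)\ge 0$ follows from a one-sided Taylor expansion in the $\mu$-direction together with $\p_\mu\bar u(p_0)=0$. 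Hence $\De\bar u(p_0,t_0)\ge 0$ in this case as well.

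At such a minimizer the gradient term in \eqref{evo of bar u} vanishes, and Cauchy--Schwarz $|h|^2\ge H^2/n$ yields
\begin{equation*}
\mathcal{L}\bar u\big|_{(p_0,t_0)}\;=\;n-2\bar u H+\bar u^2|h|^2\;\ge\;\tfrac{1}{n}\bigl(n-\bar u H\bigr)^2\;\ge\;0.
\end{equation*}
Unfolding $\mathcal{L}$ with $\n\bar u=0$ and $\De\bar u\ge 0$, and noting $u=\bar u(1+\cos\theta\<\nu,e\>)\ge 0$ at the minimum (with the further safeguard that even should the spatial minimum attempt to reach $0$, the driving term $n$ in the inequality above still forces $\mathcal{L}\bar u\ge n>0$), we obtain $\p_t\bar u(p_0,t_0)\ge 0$. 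A standard Danskin/envelope argument then gives that $m(t)$ is nondecreasing on $[0,T^*)$, so $m(t)\ge m_0>0$. Consequently,
\begin{equation*}
\<x,\nu\>(p,t)\;=\;\bar u(p,t)\bigl(1+\cos\theta\<\nu,e\>\bigr)\;\ge\;(1-|\cos\theta|)\,m_0\;=:\;c_0\;>\;0.
\end{equation*}

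The main technical point is the boundary-minimum case: combining the homogeneous Neumann condition \eqref{neumann of bar u} with the tangential critical-point conditions forces $\n\bar u=0$ at $p_0\in\p M$, and the one-sided Taylor expansion then yields $\De\bar u\ge 0$. This is precisely why the rescaled function $\bar u$ is preferred over $u$, and why the estimate is uniform in the whole range $\theta\in(0,\pi)$: working with $u$ directly would require controlling the oblique term $\cot\theta\,h(\mu,\mu)\,u$ in \eqref{deri of u}, which degenerates both as $\theta\to 0$ and as $\theta\to\pi$ and is moreover coupled to the a priori unbounded quantity $h(\mu,\mu)$.
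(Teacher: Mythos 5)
Your proof is correct and follows essentially the same route as the paper: apply the maximum principle to the capillary support function $\bar u$, using its evolution equation \eqref{evo of bar u}, the homogeneous Neumann condition \eqref{neumann of bar u}, and $|h|^2\ge H^2/n$ to get $\mathcal{L}\bar u\ge 0$ modulo gradient terms, then conclude $\bar u\ge\min_M\bar u(\cdot,0)$ and $\<x,\nu\>\ge(1-|\cos\theta|)\min_M\bar u(\cdot,0)$. The only difference is cosmetic: where the paper implicitly invokes the Hopf boundary point lemma, you handle a possible boundary minimum by hand via the vanishing co-normal derivative and a one-sided Taylor expansion, which is equally valid.
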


\begin{proof}
From Proposition \ref{evo of relative u} and  $|h|^2\geq \frac{H^2}{n}$, we see
	\begin{eqnarray*}
		\L \bar u&=&  
		n-\frac{2uH}{1+\cos\theta \<\nu,e\>}+\frac{u^2 |h|^2}{(1+\cos\theta\<\nu,e\>)^2}	 \quad \text{ mod} \quad \n \bar u	\\&\geq & \left( \frac{u|H|}{\sqrt{n}(1+\cos\theta \<\nu,e\>)} -\sqrt{n}\right)^2\geq 0,
	\end{eqnarray*}	 together with  with   $\n_\mu \bar u=0$ on $\p M$, it implies	\begin{eqnarray*}
		\bar u\geq \min_{M} \bar u(\cdot,0).
	\end{eqnarray*}
	Since  $1+\cos\theta \<\nu,e\>\geq 1-\cos\theta >0$, we conclude 
	\begin{eqnarray*}
		\<x,\nu\>=\bar u\cdot (1+\cos\theta \<\nu,e\>)\geq c_0,
	\end{eqnarray*} for some positive constant $c_0>0$ independent of $t$.
\end{proof}

\begin{prop}\label{mcf convegence}
	If the initial hypersurface $\S_0\subset  \bar \RR^{n+1}_+$ is star-shaped, then flow \eqref{flow with capillary} exists for all time. Moreover, it smoothly converges to a uniquely determined spherical cap $C_{\theta, r}(e)$ given by \eqref{sph-cap} with capillary boundary, as $t\rightarrow +\infty$.
\end{prop}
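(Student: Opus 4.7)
The plan is to upgrade the $C^0$ bound from Proposition \ref{c0 est} and the gradient bound $\langle x,\nu\rangle\ge c_0$ from Proposition \ref{c1-est} to uniform $C^k$ estimates for every $k$. Once this is achieved, standard parabolic theory applied to the scalar flow \eqref{scalar flow with capillary} yields $T^\ast=\infty$, and the long-time convergence to a spherical cap is extracted from the monotonicity of the capillary area together with the capillary isoperimetric inequality.

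The key remaining a priori bound is an upper estimate on $|h|$. I would first bound the mean curvature by applying the parabolic maximum principle directly to \eqref{evo of H}: the pointwise inequality $|h|^2\ge H^2/n$ turns the zeroth-order term into $-n|h|^2+H^2\le 0$, and on the boundary the Hopf interference is neutralized by the Neumann condition $\nabla_\mu H=0$ from \eqref{deri of H}, so $\max_M H(\cdot,t)\le \max_M H(\cdot,0)$. Converting this to an upper bound on the largest principal curvature would be done by a tensor-maximum-principle argument applied to an auxiliary function of the form $\Phi:=\log\kappa_{\max}+\alpha\bar u$ (with $\bar u$ the capillary support function from \eqref{relative support}); thanks to \eqref{neumann of bar u}, namely $\nabla_\mu\bar u=0$, the Hopf contribution again vanishes and the usual Andrews--Hamilton-type computation closes up to $\partial\Sigma_t$, giving the desired bound on $|h|$.

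With $|h|$ bounded, the quasilinear equation \eqref{scalar flow with capillary} for $\varphi$ on $\overline{\mathbb S}^n_+$ is uniformly parabolic with a uniformly oblique Neumann boundary condition, so Krylov--Safonov and Schauder theory yield uniform $C^{k,\alpha}$ estimates of all orders. In particular $T^\ast=\infty$ and $\{x(\cdot,t)\}_{t\ge 0}$ is precompact in $C^\infty$. For convergence, the enclosed volume $V_{0,\theta}(\widehat{\Sigma_t})$ is conserved by \eqref{minkowski formula-0} while the capillary area $V_{1,\theta}(\widehat{\Sigma_t})$ is non-increasing along the flow, via the Minkowski formula argument indicated in the introduction; being bounded below by the capillary isoperimetric inequality, $V_{1,\theta}$ converges, which forces the normal speed $f=n(1+\cos\theta\langle\nu,e\rangle)-H\langle x,\nu\rangle$ to tend to zero in $L^2$ in space-time. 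Combined with $C^\infty$ precompactness and a standard interpolation, this upgrades to smooth decay $f\to 0$; any smooth subsequential limit therefore solves \eqref{eq_a1}, hence is a spherical cap $C_{\theta,r_\infty}$ by Proposition \ref{prop_0}, with $r_\infty$ pinned down uniquely by the conserved $V_{0,\theta}$. A connectedness/interpolation argument promotes subsequential to full smooth convergence.

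The main obstacle is the $C^2$ boundary estimate: the capillary angle enters through the nontrivial identity \eqref{deri of u}, $\nabla_\mu u=\cot\theta\,h(\mu,\mu)u$, which would normally obstruct a clean application of the boundary maximum principle and typically force a restriction on $\theta$. The definition of $\bar u$ is precisely calibrated so that $\nabla_\mu \bar u=0$, absorbing this boundary term and making the auxiliary-function arguments work uniformly across the full range $\theta\in(0,\pi)$ claimed by the theorem.
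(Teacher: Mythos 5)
Your overall architecture ($C^0$ and star-shapedness bounds $\to$ higher estimates $\to$ long-time existence $\to$ monotonicity plus the characterization of solutions of \eqref{eq_a1} $\to$ convergence) matches the paper in outline, but the middle step contains a genuine gap. You propose to obtain a full curvature bound by a tensor-maximum-principle argument for $\Phi=\log\kappa_{\max}+\alpha\bar u$, asserting that the boundary analysis ``closes up'' for every $\theta\in(0,\pi)$ because $\nabla_\mu\bar u=0$. The Neumann condition \eqref{neumann of bar u} only neutralizes the $\bar u$-part of the test function; the boundary derivative of the curvature part does not vanish. On $\partial\Sigma_t$ one has, for tangential principal directions, $h_{\alpha\alpha;\mu}=\cot\theta\, h_{\alpha\alpha}(h_{11}-h_{\alpha\alpha})$ (see the proof of Proposition \ref{evo of bar H}), and this term has no sign in general: it is exactly why the paper's curvature-type boundary estimates (\eqref{deri of Widetilde H}, Proposition \ref{H case preserve convexity}) are restricted to $\theta\in(0,\tfrac{\pi}{2}]$, and why Theorem \ref{thm1.4} carries that restriction while Theorem \ref{thm 1.1} does not. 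As written, your Hopf-point argument for $\kappa_{\max}$ is unproven, and there is no reason to expect it to work uniformly on the full range $\theta\in(0,\pi)$ that the proposition claims.

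The step is also unnecessary, and avoiding it is precisely the point of the paper's proof. After reducing to the radial graph, the evolution \eqref{scalar flow with capillary} is a \emph{quasilinear} (divergence-structure) parabolic equation: the second-order coefficients depend only on $\varphi$ and $\nabla^0\varphi$, and $H\langle x,\nu\rangle$ is linear in $\nabla^0{}^2\varphi$. Hence the $C^0$ bound \eqref{c0 bound} and the lower bound \eqref{c1 est} on $\langle x,\nu\rangle$ (equivalently an upper bound on $v=\sqrt{1+|\nabla^0\varphi|^2}$) already give uniform parabolicity, and since $|\cos\theta|<1$ the capillary boundary condition is uniformly oblique; the quasilinear parabolic theory of \cite{LSU,Gary1996} then yields uniform $C^\infty$ estimates and long-time existence with no second fundamental form estimate at all. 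Your use of an $H$-upper bound via \eqref{evo of H} and \eqref{deri of H} is fine (it is the paper's Proposition \ref{upper bound of H}), but in the paper it serves the convexity/monotonicity part, not the existence proof. Your convergence sketch (conservation of $\V_{0,\theta}$, monotone $\V_{1,\theta}$, speed $f\to0$, limit a cap by Proposition \ref{prop_0}, radius fixed by the conserved volume) is consistent with the argument the paper delegates to \cite{SWX,WeX21,WWX2022}, though the passage from subsequential to full smooth convergence needs more than the ``connectedness/interpolation'' phrase. To repair your proof, either drop the curvature estimate and argue through the scalar quasilinear equation, or restrict the claimed range of $\theta$, which would weaken the proposition.
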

\begin{proof}
From \eqref{c0 bound} we have a uniform bound for $\varphi$, and  from  \eqref{c1 est}, we have a uniform bound for $v$, and hence a  bound for  $\nabla ^0\varphi$. Therefore
$\varphi$ is uniformly bounded in $C^1(\bar\SS^n_+\times[0,T^*))$ and the scalar equation in \eqref{scalar flow with capillary} is uniformly parabolic. Since $|\cos\theta|<1$, from the standard quasi-linear parabolic theory with a strictly oblique boundary condition theory (cf. \cite{LSU,Gary1996}), we conclude the uniform $C^\infty$ estimates and the long-time existence. And the	  convergence can be shown similarly by using the argument as in \cite{SWX,WeX21,WWX2022}, we omit it here.
\end{proof}	

\subsection{Preserving Convexity}\label{sec3.2}
In this subsection, we show flow \eqref{flow with capillary} preserves the convexity and finish the proof of Theorem \ref{thm1.4}. 	First, we show the uniform upper bound of $H$ along flow \eqref{flow with capillary}.
	\begin{prop}\label{upper bound of H}
	If $\Sigma_{t}$ solves flow \eqref{flow with capillary}, then the mean curvature is uniformly bounded from above, that is,		\begin{eqnarray*}
			H(p, t)\leq \max_{M}H(\cdot, 0),\quad\quad \forall (p, t)\in M\times [0,T^*).
		\end{eqnarray*}
	\end{prop}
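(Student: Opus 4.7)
The plan is to apply the parabolic maximum principle directly to the evolution equation for $H$ derived in Proposition \ref{evol of H}, using the Neumann-type boundary condition $\nabla_\mu H = 0$ on $\partial \Sigma_t$.

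The starting point is the identity
\begin{eqnarray*}
\mathcal{L}H = 2\langle \nabla H,\nabla u\rangle - n|h|^2 + H^2.
\end{eqnarray*}
Since $H = \mathrm{tr}(h)$, the Cauchy--Schwarz inequality gives $H^2 \le n|h|^2$, so the zero-order term is nonpositive and we obtain the differential inequality
\begin{eqnarray*}
\partial_t H - \langle x,\nu\rangle \Delta H - \langle T + Hx - n\cos\theta\, e + 2\nabla u,\,\nabla H\rangle \le 0.
\end{eqnarray*}
By Proposition \ref{c1-est} we have $\langle x,\nu\rangle \ge c_0 > 0$ along the flow, so the operator on the left is uniformly parabolic, with smooth (on $[0,T^*)$) first-order coefficients, making the maximum principle applicable.

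Next I would argue that $H_{\max}(t) := \max_M H(\cdot,t)$ is non-increasing. At any fixed $t$, suppose the maximum is attained at an interior point $p \in \mathrm{int}(M)$. Then $\nabla H(p,t)=0$ and $\Delta H(p,t) \le 0$, and the inequality above gives $\partial_t H(p,t) \le 0$. If instead the maximum is attained at a boundary point $p \in \partial M$, one applies the Hopf boundary point lemma to the uniformly parabolic operator above: at such a boundary maximum, either $H$ is spatially constant on the component of $\partial M$ or $\nabla_\mu H(p,t) > 0$. The latter contradicts the boundary condition $\nabla_\mu H = 0$ from Proposition \ref{evol of H}, so the maximum effectively propagates into the interior, and the conclusion $\partial_t H_{\max}(t) \le 0$ follows in the viscosity/one-sided sense. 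Integrating yields $H(p,t) \le \max_M H(\cdot,0)$ for all $(p,t)$.

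The only slightly delicate point is the treatment at the boundary, where one has to invoke the Hopf lemma in the parabolic setting together with the compatibility of the Neumann condition $\nabla_\mu H = 0$; this is standard for oblique (here orthogonal) boundary problems, as used for instance in the analogous free/capillary boundary curvature flows cited earlier (see \cite{WW20,WeX21,WWX2022}). No new estimates are needed beyond Proposition \ref{evol of H} and the star-shapedness already established in Proposition \ref{c1-est}.
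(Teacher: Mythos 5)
Your proof is correct and follows essentially the same route as the paper: the evolution equation of Proposition \ref{evol of H}, the inequality $n|h|^2\ge H^2$, the Neumann condition $\nabla_\mu H=0$, and the parabolic maximum principle with the Hopf boundary point lemma. The extra remarks on uniform parabolicity via Proposition \ref{c1-est} are consistent with the paper's setup and add nothing contradictory.
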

	\begin{proof}
	From equation \eqref{evo of H} and $n|h|^2\geq H^2$, we see 
		\begin{eqnarray}
			\L H\leq 0,\quad \text{ mod } \n H,
		\end{eqnarray}and $\n_{\mu }H =0$ on $\p M$, hence the conclusion follows directly from  the maximum principle.
	\end{proof}
Next we show the uniform lower bound of mean curvature, that is, the mean convexity is preserved along flow \eqref{flow with capillary}.
	\begin{prop}\label{lower bound of H}
	If $\Sigma_{t}$ solves flow \eqref{flow with capillary} with an initial hypersurface $\Sigma_{0}$ being a strictly mean convex capillary hypersurface   in $\bar \RR^{n+1}_+$, then
		\begin{eqnarray*}
			H(p, t)\geq C,\quad\quad \forall (p, t)\in M\times [0,T^*),
		\end{eqnarray*}
		where the positive constant  $C$ depends on the initial datum.
	\end{prop}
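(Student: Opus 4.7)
The plan is to study the evolution of the product $\Phi := H \bar u$, where $\bar u$ is the capillary support function from Definition \eqref{relative support}. This choice is motivated by the fact that spherical caps satisfy $H\bar u \equiv n$ (by \eqref{eq_a1}), so $\Phi$ is a natural quantity that is constant on stationary solutions. Crucially, both factors satisfy the homogeneous Neumann-type conditions \eqref{deri of H} and \eqref{neumann of bar u}, so $\nabla_\mu \Phi = 0$ on $\partial M$, which will make the maximum principle clean.

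First I would compute $\mathcal{L}\Phi$ using the product rule $\mathcal{L}(AB) = A\mathcal{L}B + B\mathcal{L}A - 2\langle x,\nu\rangle \langle \nabla A, \nabla B\rangle$ together with the evolution equations \eqref{evo of H} and \eqref{evo of bar u}. After cancellations, the non-gradient part reduces to
\begin{equation*}
(\bar u H - n)(\bar u |h|^2 - H).
\end{equation*}
Invoking the pinching inequality $|h|^2 \geq H^2/n$, this expression is bounded below by $H(\bar u H - n)^2/n$, which is non-negative as long as $H \geq 0$. So at an interior minimum of $\Phi$ we have $\mathcal{L}\Phi \geq 0$, modulo gradient terms.

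Next I would verify that the surviving gradient terms vanish at a critical point of $\Phi$. At such a point, $\nabla H = -(H/\bar u)\nabla \bar u$. Expanding $\nabla u = (1+\cos\theta\langle\nu,e\rangle)\nabla \bar u + \bar u \nabla(1+\cos\theta\langle\nu,e\rangle)$, the three contributions $2\bar u\langle \nabla H, \nabla u\rangle$ (from $\mathcal{L}H$), $2Hu \langle \nabla \bar u, \nabla(1+\cos\theta\langle\nu,e\rangle)\rangle/(1+\cos\theta\langle\nu,e\rangle)$ (from $\mathcal{L}\bar u$), and $-2u\langle \nabla H, \nabla \bar u\rangle$ (the cross term) cancel identically after substitution. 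Thus at any interior minimum, $\partial_t \Phi \geq 0$. The Neumann condition $\nabla_\mu \Phi = 0$ together with Hopf's lemma rules out boundary minima that would decrease $\min \Phi$. This gives $H\bar u \geq \min_M (H\bar u)(\cdot, 0)$ along the flow, which is strictly positive by the strict mean convexity of $\Sigma_0$. Finally, the upper bound $\bar u \leq \sup_M u / (1 - |\cos\theta|)$ combined with Proposition \ref{c0 est} bounds $\bar u$ uniformly from above, so $H \geq C > 0$.

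The main obstacle is the third step, the identical cancellation of all gradient terms at critical points of $H\bar u$; it depends sensitively on the specific structure of the extra term $2u\langle \nabla \bar u, \nabla(1+\cos\theta\langle\nu,e\rangle)/(1+\cos\theta\langle\nu,e\rangle)\rangle$ in \eqref{evo of bar u}, and on the identity $u = \bar u(1+\cos\theta\langle \nu, e\rangle)$. If this cancellation failed, one would need to modify the test function (e.g., multiply by a power of $1+\cos\theta\langle \nu,e\rangle$) to absorb the residual gradient contributions—but the computation above suggests that $H\bar u$ is already the correct choice, which is also consistent with its interpretation as the natural "defect from being stationary."
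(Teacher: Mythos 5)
Your setup is the same as the paper's: the test function $P=H\bar u$, the Neumann condition $\nabla_\mu P=0$ on $\p M$ combined with Hopf's lemma, and the product-rule computation which, at a critical point of $P$, leaves exactly the zeroth-order quantity $(\bar uH-n)(\bar u|h|^2-H)$; indeed the gradient contributions assemble into $2\bar u\<\nabla(1+\cos\theta\<\nu,e\>),\nabla P\>$ as in \eqref{evolution P}, so your cancellation claim is correct. The genuine gap is in the step where you bound this zeroth-order term from below. From $|h|^2\ge H^2/n$ you get $\bar u|h|^2-H\ge \frac{H}{n}(\bar uH-n)$, but multiplying by the factor $(\bar uH-n)$ preserves the inequality only when $\bar uH\ge n$. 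In the regime $\bar uH<n$ --- precisely the regime you must control, since the goal is a positive lower bound on $P$ --- that factor is negative and the inequality reverses: you only obtain $(\bar uH-n)(\bar u|h|^2-H)\le \frac{H}{n}(\bar uH-n)^2$, and the term can in fact be strictly negative. Concretely, for a merely mean convex (non-convex) hypersurface one can have $H$ small while $|h|^2$ is large (principal curvatures of nearly opposite signs), so $\bar u|h|^2-H>0$ while $\bar uH-n<0$. Hence your assertion that $\mathcal{L}\Phi\ge 0$ modulo gradient terms, and the resulting monotonicity $H\bar u\ge\min_M (H\bar u)(\cdot,0)$, do not follow from the pinching inequality you invoke.

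For comparison, the paper does not claim monotonicity of $\min P$. At an interior spacetime minimum it only uses $\mathcal{L}P\le 0$ to derive \eqref{2nd polynoimial} and then argues by dichotomy: either $\bar uH\ge n$ at that point (done), or $\bar uH<n$, in which case the inequality forces $\bar u|h|^2\ge H$, and an upper pinching bound of the form $|h|^2\le nH^2$ is invoked to conclude $\bar uH\ge 1/n$. Some upper bound on $|h|^2$ by a multiple of $H^2$ (automatic for convex hypersurfaces, which is the situation in which this proposition is actually applied in the convexity-preservation argument) is indispensable to close the case $\bar uH<n$; your proof uses only the lower pinching $|h|^2\ge H^2/n$ and therefore cannot close it. The remaining ingredients of your proposal (the boundary Hopf argument via $\nabla_\mu P=0$, and the bound $\bar u\le \sup u/(1-|\cos\theta|)$ together with Proposition \ref{c0 est} to convert a lower bound on $P$ into one on $H$) are fine and agree with the paper.
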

	\begin{proof}
Define the function $$P:=H \bar u.$$ Using \eqref{neumann of bar u} and \eqref{deri of H},   it is easy to see that \begin{eqnarray}\label{neuman of P}
	 \nabla_{\mu}P= 0,\quad \text{ on } \p M. 
\end{eqnarray}
Using  \eqref{evo of bar u} and \eqref{evo of H}, we obtain
		\begin{eqnarray} \label{evolution P} 
		\L P&=& \bar u\mathcal{L}H+H\mathcal{L} \bar u-2u\<\n \bar u,\n H\> \\ \notag
			&=& \bar u^{2}H|h|^{2}-\bar uH^{2}-n\bar u |h|^{2}+nH+2\bar u\< \n(1+\cos\theta \<\nu,e\>),\n P\>.\notag
		\end{eqnarray} 
	From \eqref{neuman of P} and  the Hopf Lemma,	if $P$ attains the minimum value at $t=0$, then the conclusion follows directly by combining with the uniform bound of $\bar u$. Therefore, we assume that
 $P$ attains the minimum value at some interior point, say $p_0\in \text{int}(M)$. At $p_0$, we have
	$$\n P=0, \quad \L P\leq 0.$$ Substituting  it into \eqref{evolution P} yields
	\begin{eqnarray}\label{2nd polynoimial}
	\left( \frac n H-\bar u\right) \bar u|h|^2+\bar uH\geq n.
\end{eqnarray}	 
If $\bar uH\geq n$ at $p_0$, then  we are done. Assume now  that $\bar uH<n$ at $p_0$, then
using $|h|^2 \leq n H^2$ in \eqref{2nd polynoimial}, we obtain 
		\begin{eqnarray}
		\bar	uH(p_0, t)\geq c,
		\end{eqnarray}for some positive constant $c$, which only depends  on $n$. This also yields the desired estimate.
	
From above discussion, we complete the proof of Proposition \ref{lower bound of H}.
	\end{proof}

We now show that $\V_{k,\theta}(\widehat{\S_t})$ is 
non-increasing under  flow \eqref{flow with capillary}.
\begin{prop}\label{mono along mcf}
	As long as  flow  \eqref{flow with capillary} exists and $ \S_t$ is strictly convex, the enclosed volume	$ \V_{0,\theta}(\widehat{\S_t})$ is preserved and $ \V_{k,\theta}(\widehat{\S_t})$ is non-increasing  for $1\leq k  \leq n$.
\end{prop}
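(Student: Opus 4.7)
The plan is to compute $\frac{d}{dt}\V_{k,\theta}(\widehat{\S_t})$ using the variational formulas for capillary quermassintegrals derived in \cite{WWX2022}, and then combine them with the Minkowski identities \eqref{minkowski formula-0}--\eqref{minkowski formula} and the Newton--MacLaurin inequalities on the positive cone. For $k=0$ the computation is immediate: the general first variation of enclosed volume is $\frac{d}{dt}|\widehat{\S_t}| = \int_{\S_t} f\, dA$, and substituting $f=n(1+\cos\theta\langle\nu,e\rangle)-H\langle x,\nu\rangle$ and invoking \eqref{minkowski formula-0} gives zero, so $\V_{0,\theta}(\widehat{\S_t})$ is preserved.

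For $1\le k\le n$, I would appeal to the variational formula for $\V_{k,\theta}$. The boundary correction $\tfrac{\cos\theta \sin^{k-1}\theta}{n}\int_{\p\S}H^{\p\S}_{k-2}\,ds$ in \eqref{quermassintegrals} is engineered precisely so that, under any flow preserving the contact angle $\theta$, the boundary integrals produced by differentiating $\int_{\S_t}H_{k-1}\,dA$ cancel with the time derivative of this boundary term, leaving an identity of the form
\begin{equation*}
(n+1)\frac{d}{dt}\V_{k,\theta}(\widehat{\S_t}) = (n-k+1)\int_{\S_t} H_k\, f\, dA.
\end{equation*}
Substituting the expression for $f$, applying \eqref{minkowski formula} to rewrite $n\int_{\S_t}H_k(1+\cos\theta\langle\nu,e\rangle)\,dA$ as $n\int_{\S_t}H_{k+1}\langle x,\nu\rangle\,dA$, and using $H = nH_1$, the right-hand side collapses to
\begin{equation*}
\tfrac{n(n-k+1)}{n+1}\int_{\S_t} (H_{k+1}-H_1 H_k)\, \langle x,\nu\rangle\, dA.
\end{equation*}
Strict convexity of $\S_t$ then yields $H_{k+1}\le H_1 H_k$ pointwise by Newton--MacLaurin, while star-shapedness together with Proposition \ref{c1-est} gives $\langle x,\nu\rangle \ge c_0>0$; the integrand is therefore non-positive, proving monotonicity, with pointwise equality only at umbilic points.

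The main (and really only) nontrivial input is the variational identity displayed above, which relies on an intricate cancellation between the boundary contributions coming from $\frac{d}{dt}\int_{\S_t}H_{k-1}\,dA$ and the time derivative of the boundary correction in \eqref{quermassintegrals}; this is precisely the computation carried out in \cite{WWX2022} and can be cited directly. The top case $k=n$, where \eqref{minkowski formula} is invoked at the upper end of its stated range, presents no additional difficulty once that variational identity is in hand.
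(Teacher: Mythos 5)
Your proposal is correct and follows essentially the same route as the paper: cite the variational formula $\partial_t\V_{k,\theta}(\widehat{\S_t})=\tfrac{n+1-k}{n+1}\int_{\S_t}H_k f\,dA_t$ from \cite{WWX2022}, then combine the Minkowski formulas \eqref{minkowski formula-0}--\eqref{minkowski formula} with the Newton--MacLaurin inequality $H_1H_k\ge H_{k+1}$ and the positivity of $\<x,\nu\>$ from Proposition \ref{c1-est}. The only difference is cosmetic (you apply the Minkowski identity before Newton--MacLaurin, the paper does the reverse), so the two arguments are equivalent.
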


\begin{proof}
	Using  \cite[Theorem 2.6]{WWX2022} and Minkowski formula \eqref{minkowski formula-0}, we see
	\begin{eqnarray*}
		\p_t \V_{0 ,\theta}(\widehat{\S_t})=\int_{\S_t} f dA_t=0,
	\end{eqnarray*}
	and
	\begin{eqnarray*}
		\p_t \V_{k ,\theta}(\widehat{\S_t})&=&\frac{n+1-k}{n+1}\int_{\S_t} f H_{k } dA_t
		\\&=& \frac{n+1-k}{n+1}\int_{\S_t}\left[ nH_k\left(1+\cos\theta\<\nu, e\>\right)-HH_k\<x, \nu\>\right]dA_t
		\\&\leq& \frac{n(n+1-k)}{n+1}\int_{\S_t} \left[H_{k}\big(1+\cos\theta \langle \nu,e\rangle\big)-H_{k+1}\langle x,\nu\rangle\right]dA_t
		\\&=&0,
	\end{eqnarray*} where we have used the Newton-MacLaurin inequality $H_1H_k\geq H_{k+1}$, Proposition \ref{c1-est} and the  Minkowski formula \eqref{minkowski formula} in the last two steps.
	
\end{proof}

The proof implies the following nice property, i.e. a characterization result on the spherical cap.

\begin{prop}\label{prop_0}
If a capillary hypersurface $\Sigma$ satisfies \eqref{eq_a1}, i.e.,
\begin{eqnarray}\label{eq_a2}
	1+\cos\theta\<\nu, e\>-\frac{H}{n}\<x, \nu\>=0,
\end{eqnarray} 
then it is $C_{r,\theta}$ for some $r>0$.

\end{prop}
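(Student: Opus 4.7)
The plan is to convert the pointwise equation \eqref{eq_a2} into an integral identity via the first-order capillary Minkowski formula, invoke the Newton--MacLaurin inequality to force total umbilicity, and then identify the specific spherical cap from the capillary angle together with \eqref{eq_a2}.

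First, I would multiply \eqref{eq_a2} by $H_1=H/n$ and integrate over $\Sigma$ to obtain
\begin{eqnarray*}
\int_\Sigma H_1\bigl(1+\cos\theta\<\nu,e\>\bigr)\,dA = \int_\Sigma H_1^2\<x,\nu\>\,dA.
\end{eqnarray*}
Applying the capillary Minkowski formula \eqref{minkowski formula} with $k=2$ to the left-hand side converts this identity into
\begin{eqnarray*}
\int_\Sigma \bigl(H_1^2 - H_2\bigr)\<x,\nu\>\,dA = 0.
\end{eqnarray*}
Since $\theta\in(0,\pi)$ guarantees $1+\cos\theta\<\nu,e\>\geq 1-|\cos\theta|>0$ on $\Sigma$, the pointwise equation \eqref{eq_a2} forces both $H$ and $\<x,\nu\>$ to be everywhere nonzero and to share the same sign; continuity on the connected $\Sigma$ then shows that $\<x,\nu\>$ has constant sign. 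Combined with the Newton--MacLaurin inequality $H_1^2\geq H_2$, whose equality case characterises total umbilicity, the vanishing integral above yields $H_1^2\equiv H_2$ pointwise, so $\Sigma$ is totally umbilical.

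A connected totally umbilical hypersurface in $\R^{n+1}$ must lie on either a round sphere or a hyperplane; the latter is ruled out by $H\neq 0$. Thus $\Sigma\subset\partial B_r(p)$ for some $r>0$ and some $p\in\R^{n+1}$. The capillary condition that $\Sigma$ meets $\partial\bar{\R}^{n+1}_+$ at the constant angle $\theta$ forces the vertical coordinate $p_{n+1}=-r\cos\theta$. A direct substitution of $\nu=(x-p)/r$ and $H=n/r$ into \eqref{eq_a2}, together with $|x-p|^2=r^2$, reduces to a linear identity of the form $\sum_{i=1}^n x_ip_i\equiv|p|^2-r^2\cos^2\theta$ on $\Sigma$; since $\Sigma$ is a nontrivial piece of a sphere, the horizontal components of $p$ must vanish. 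Hence $p=r\cos\theta\,e$ and $\Sigma=C_{\theta,r}$.

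The most delicate point is the sign argument for $\<x,\nu\>$: without it, the vanishing integral would not directly yield pointwise umbilicity. Once the constant sign is available, Newton--MacLaurin rigidity gives umbilicity immediately. The final identification of $C_{\theta,r}$ (rather than a horizontal translate of a spherical cap with the correct vertical offset) is a short explicit computation, and it is precisely the origin-dependence of $\<x,\nu\>$ in \eqref{eq_a2} that pins the horizontal center to the origin.
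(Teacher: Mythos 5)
Your proposal is correct and its core computation is the same as the paper's: both turn \eqref{eq_a2} into the integral identity $\int_\Sigma\bigl(H_1^2-H_2\bigr)\<x,\nu\>\,dA=0$ via the capillary Minkowski formula \eqref{minkowski formula} with $k=2$, and both use the Newton--MacLaurin equality case to conclude umbilicity. Where you genuinely differ is the treatment of the sign of $\<x,\nu\>$: the paper first runs a sliding argument, shrinking a large cap $C_{\theta,r}$ until it touches $\Sigma$ tangentially, to get $H(x_0)>0$ and hence $H>0$, $\<x,\nu\>>0$ everywhere, which is needed because the paper's chain of inequalities multiplies $H_2$ by $\<x,\nu\>$; you instead observe that \eqref{eq_a2} forces $H\<x,\nu\>\ge 1-|\cos\theta|>0$, so on a connected $\Sigma$ the factor $\<x,\nu\>$ has a fixed nonzero sign, which already makes the integrand one-signed and yields pointwise rigidity without any comparison argument. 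That is a nice simplification, and your explicit identification of the center (the boundary angle pinning the vertical coordinate, the residual linear identity $\<x-p,\,r\cos\theta\,e-p\>=0$ killing the horizontal part) is more detailed than the paper's one-line conclusion. The one loose end is the orientation: your final substitution assumes $\nu=(x-p)/r$, $H=n/r>0$, but your sign argument alone allows the case $H<0$, $\<x,\nu\><0$ everywhere, in which the same computation produces the reflected cap $|x+r\cos\theta\,e|=r$ with inward-pointing normal. This is excluded by the standing convention that $\nu$ is the outward normal of the enclosed region $\widehat\Sigma$ (for a compact embedded cap over the support plane the outward normal points away from the center, so $H=n/r$), or, as in the paper, by the touching argument which rules out $H<0$ from the start; you should say one sentence to this effect.
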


	\begin{proof}
		Since $|\cos \theta| <1$, \eqref{eq_a2} implies that 
		$\<x,\nu\>H\ge c_0>0$ on $\Sigma$, which means that $H$ can never change sign.
		 It is clear  that $\S$ is contained in a $C_{r, \theta}$ with $r$ large.
		Then we decrease $r$, there exists a  first largest radius $r_0$ and a first touch  point $x_0\in \Sigma$.
		If $x_0$ lies in the interior of $\S$, then  $C_{r_0,\theta}$ touches $\Sigma$ at $x_0$ tangentially. If $x_0 \in \p \S$, then due to the assumption of the contact angle
		of $\Sigma$,   $C_{r_0,\theta}$ touches also $\Sigma$ at $x_0$ tangentially. It follows that $H(x_0)>0$ , and hence $H>0$ and $\<x,\nu\> >0$ on $\Sigma$. 
		Now from \eqref{eq_a2}  and $H^2 =n^2 H_1^2 \ge n^2 H_2$, we have
			\begin{eqnarray*}
		0&=& \int_{\S}\left[ nH\left(1+\cos\theta\<\nu, e\>\right)-H^2\<x, \nu\>\right]dA
			\\&\leq& n^2\int_{\S} \left[H_1\big(1+\cos\theta \langle \nu,e\rangle\big)-H_{2}\langle x,\nu\rangle\right]dA
			\\&=&0,
		\end{eqnarray*} 
 the equality  implies that $\Sigma$ is umbilical, which follows also the conclusion.
		\end{proof}
Now  we prove that the mean curvature type flow \eqref{flow with capillary} preserves the convexity, if $\theta\in (0,\frac{\pi}{2}]$. 
	\begin{prop}\label{H case preserve convexity}
		Let $\Sigma_{0}$ be a strictly convex hypersurface, if $\Sigma_{t}$ solves flow \eqref{flow with capillary} with the initial value $\Sigma_{0}$ and  $\theta\in (0,\frac{\pi}{2}]$, then
		\begin{eqnarray*}
			\min_{1\leq i\leq n}\kappa_{i}(p, t)\geq c,\quad \quad \forall (p, t)\in M\times[0, T^*),
		\end{eqnarray*}
		where the positive constant $c$ depends only on $\Sigma_{0}$.  
	\end{prop}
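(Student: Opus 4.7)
The plan is to show that the harmonic curvature $\bar H = \sum_i 1/\kappa_i$ stays uniformly bounded from above on $M\times[0,T^\ast)$. This is equivalent to $\kappa_{\min}\ge 1/\bar H$ being bounded below by a positive constant, and it automatically propagates strict convexity since $\bar H$ would blow up whenever some $\kappa_i\to 0^+$. The key ingredients are the evolution equation \eqref{evo of widetilde H} and the boundary inequality \eqref{deri of Widetilde H} supplied by Proposition \ref{evo of bar H}, both of which require $\theta\in(0,\pi/2]$.

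First I would rule out a maximum of $\bar H$ on the boundary for $t>0$: if $(p_0,t_0)$ with $p_0\in\partial M$ is such a maximum, the parabolic Hopf lemma applied to \eqref{evo of widetilde H} (rewritten in the form $\mathcal L\bar H\le$ lower-order terms) forces $\nabla_\mu\bar H(p_0,t_0)>0$, contradicting \eqref{deri of Widetilde H}. Hence the maximum on $M\times[0,t_0]$ is attained either at $t=0$, where $\bar H$ is bounded by the strict convexity of $\Sigma_0$, or at an interior point. At an interior maximum $\partial_t\bar H\ge 0$, $\nabla\bar H=0$, $\Delta\bar H\le 0$, so $\mathcal L\bar H\ge 0$. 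Substituting into \eqref{evo of widetilde H} yields
\begin{equation*}
0\le -2u(b^{ii})^2 b^{jj}h_{ij;k}^2 - 2b^i_k H_{;k}\langle x,e_i\rangle - u|h|^2\bar H - H\bar H + nH\langle x,\nu\rangle + n^2.
\end{equation*}

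The decisive and most delicate step is absorbing the indefinite term $-2b^i_k H_{;k}\langle x,e_i\rangle$ into the nonpositive quadratic $-2u(b^{ii})^2 b^{jj} h_{ij;k}^2$. In a frame diagonalizing $h_{ij}$ the Codazzi identity gives $H_{;i}=\sum_j h_{ij;j}$; combined with Cauchy--Schwarz and the upper bound $\kappa_j\le H\le \max_M H(\cdot,0)$ from Proposition \ref{upper bound of H}, this lets me estimate, for any small $\lambda>0$,
\begin{equation*}
\bigl|2b^i_k H_{;k}\langle x,e_i\rangle\bigr|\;\le\;\lambda n H_{\max}\cdot u(b^{ii})^2 b^{jj}h_{ij;k}^2 + \frac{1}{\lambda u}|x^T|^2.
\end{equation*}
Choosing $\lambda<2/(nH_{\max})$ absorbs the first summand into the good term, and the second is controlled by Propositions \ref{c0 est} and \ref{c1-est} (which give $|x|\le R$ and $u\ge c_0>0$). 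The inequality then reduces, at the interior maximum, to
\begin{equation*}
u|h|^2\bar H + H\bar H \;\le\; nHu + n^2 + C.
\end{equation*}

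Finally, using $|h|^2\ge H^2/n$ together with the strictly positive lower bound $H\ge H_0>0$ from Proposition \ref{lower bound of H}, the lower bound $u\ge c_0$, and the upper bounds on $H$ and $u$, the left-hand side dominates a fixed positive multiple of $\bar H$, yielding $\bar H\le C'$ at the interior maximum, with $C'$ depending only on $\Sigma_0$. Combined with the no-boundary-max argument and the bounded initial value, this produces the uniform estimate $\bar H\le C'$ on $M\times[0,T^\ast)$, whence $\kappa_i\ge 1/C'$ for all $i$. The main obstacle is really the absorption step: without the Codazzi symmetry and the already-established upper bound on $H$, the gradient-type term cannot be tamed by the good quadratic, and the whole maximum principle argument collapses.
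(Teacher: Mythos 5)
Your argument is correct and follows essentially the same route as the paper: a maximum principle for $\bar H$ using the boundary inequality $\nabla_\mu\bar H\le 0$ from Proposition \ref{evo of bar H}, the interior-maximum substitution into \eqref{evo of widetilde H}, absorption of the term $-2b^i_kH_{;k}\langle x,e_i\rangle$ into the good quadratic via the key inequality $\sum_{j,k}b^{jj}h_{ij;k}^2\ge H_{;i}^2/H$ (which you obtain by Codazzi plus weighted Cauchy--Schwarz, while the paper expands $b^{jj}$ explicitly -- the same estimate), and the conclusion via the lower bound on $H$ and the $C^0$/support-function bounds. No gaps beyond those the paper itself glosses over.
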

	\begin{proof}
It is equivalent to  show the uniform upper bound for $\bar{H}$. 
If the maximum value of $\bar H$ is reached at $t=0$, then we are done. Otherwise, from $\n_{\mu}\bar H\leq 0$ on $\p M$ in Proposition \ref{evo of bar H} and the Hopf boundary Lemma, we have that $\bar{H}$   attains its maximum value at some interior point, say $p_{0}\in {\rm int}(\Sigma_t)$. At $p_0$, we choose an orthonormal frame $\{e_{i}\}_{i=1}^{n}$ such that $(h_{ij})$ is diagonal, which follows that $(b^{ij})$ is also  diagonal. Noticing that for each fixed $k$, it holds
		\begin{eqnarray*}
\sum_{i,j}	  b^{jj} h_{ij;k}^2	&=&\sum_{j}\frac{1}{H}\left(1+\sum_{l\neq j} \frac{h_{ll}}{h_{jj}}\right)\left(h_{jj;k}^2+\sum_{i\neq j} h_{ij;k}^2\right)
\\&\geq &  \frac{1}{H} \left( \sum_jh_{jj;k}^2 + \sum_{i\neq j} \frac{h_{ii}}{h_{jj}} h_{jj;k}^2\right)
\\&= &\frac{1}{H} \left[ \sum_jh_{jj;k}^2 + \sum_{i>j} \left( \frac{h_{ii}}{h_{jj}} h_{jj;k}^2+\frac{h_{jj}}{h_{ii}} h_{ii;k}^2\right)\right]
\\&\geq & \frac{H_{;k}^2}{H},
	\end{eqnarray*}where the last inequality follows from the Cauchy-Schwarz inequality. Substituting this into \eqref{evo of widetilde H}, at $p_0$,  we have 
		\begin{eqnarray*}
			0&\leq &\mathcal{L}\bar{H} 
			\\
          	&\leq &-2H^{-1}u(b^{ii})^{2}H_{;i}^{2}-2b^{ii}H_{;i}\<x, e_{i}\>-u|h|^{2}\bar{H}-H\bar{H}+n^{2}+nuH\\
			&\leq &\frac{Hu^{-1}|x^T|^2}{2}-u|h|^{2}\bar{H}-H\bar{H}+n^{2}+nuH,
		\end{eqnarray*} which
together with Proposition \ref{lower bound of H}, implies  $\bar{H}\leq C$,  the desired estimate. Hence we complete the proof.
	\end{proof}

\begin{proof}[\textbf{Proof of Theorem \ref{thm1.4}}]\
	
 	Assume that $\S$ is strictly convex, the proof of Theorem \ref{thm1.4} follows from our main Theorem \ref{thm 1.1}, the monotonicity of
the relative quermassintegrals, Proposition \ref{mono along mcf}  and Proposition \ref{H case preserve convexity}.  

	When $\S$ is convex but not strictly convex, the inequality \eqref{af ineq V_k V_0} follows by approximation. The equality characterization can be proved  similar to \cite[Section 4]{SWX}, by using an argument of \cite{GL09}. We omit the details here.
\end{proof}

\
\

\noindent\textbf{Acknowledgment:} XM is partially  supported by CSC (No. 202106340053) and the doctoral dissertation creation project of USTC. LW is partially supported by NSFC (Grant No. 12201003, 12141105, 12171260). We would like to thank the referee for  the  careful  reading and valuable suggestions to improve the context of the paper.

\end{document}